\newtheorem{thm}{Theorem}
\newtheorem{lemma}[thm]{Lemma}
\newtheorem{conj}{Conjecture}
\newcommand{\zt}{{\mathbb Z}^2}
\renewcommand{\P}{{\mathbb P}}
\newcounter{mycount}
\newenvironment{mylist}{\begin{list}{{\rm (\roman{mycount})}}%
{\usecounter{mycount}\itemsep 0pt}}{\end{list}}
\newcommand{\dof}{\bf}
\title{Local Bootstrap Percolation}
\author{
Janko Gravner\thanks{Funded in part by NSF Grant DMS-0204376
and the Republic of Slovenia's Ministry of Science Program P1-285}
 \and
Alexander E. Holroyd\thanks{Funded in part by an NSERC (Canada)
Discovery Grant, and by Microsoft Research} }
\begin{document}
\maketitle
\renewcommand{\thefootnote}{}
\footnotetext{{\bf\hspace{-6mm}Key words:} bootstrap percolation,
cellular automaton, metastability, finite-size scaling, crossover}
\footnotetext{{\bf\hspace{-6mm}\bf 2000 Mathematics Subject
Classifications:} Primary 60K35; Secondary 82B43}

\begin{abstract}
We study a variant of bootstrap percolation
in which growth is restricted to a single active cluster.
Initially there is a single {\em active} site at the origin, while
other sites of $\mathbb{Z}^2$ are independently {\em occupied}
with small probability $p$, otherwise {\em empty}.  Subsequently,
an empty site becomes active by contact with two or more active
neighbors, and an occupied site becomes active if it has an active
site within distance 2. We prove that the entire lattice becomes
active with probability $\exp [\alpha(p)/p]$, where $\alpha(p)$ is
between $-\pi^2/9+c\sqrt p$ and $-\pi^2/9+C\sqrt p(\log
p^{-1})^3$. This corrects previous numerical predictions for the
scaling of the correction term.  
\end{abstract}

\section{Introduction}

{\dof Local bootstrap percolation} is a 3-state cellular automaton
on the square lattice $\zt$ defined as follows.   At each time
step $t=0,1,2\ldots$, each site of $\zt$ is either {\dof empty}
($\circ$), {\dof occupied} ($\bullet$), or {\dof active}
($\star$). Let $p\in (0,1)$.  The {\em initial} configuration is
given by a random element $\sigma$ of
$\{\circ,\bullet,\star\}^{\zt}$ under a probability measure $\P_p$
in which
\begin{align*}
\P_p\big[\sigma(0)=\star\big]=p,& \quad \P_p\big[\sigma(0)=\circ\big]=1-p;\\
\P_p\big[\sigma(x)=\bullet\big]=p,& \quad
\P_p\big[\sigma(x)=\circ\big]=1-p, \quad (x\neq 0);
\end{align*}
and the states of different sites are independent.  (Here
$0=(0,0)\in\zt$ is the origin).  Subsequently, the configuration
at time $t+1$ is obtained from that at $t$ according to the
following deterministic rules.
\begin{itemize}
\setlength{\leftmargin}{1cm} \setlength{\itemsep}{-1ex}
\item[(L1)] Each $\bullet$ becomes $\star$ if it has at least one
$\star$ within $\ell^1$-distance 2.
\item[(L2)] Each $\circ$ becomes $\star$ if it has at least two
$\star$'s within $\ell^1$-distance 1.
\item[(L3)] All other states remain unchanged.
\end{itemize}
We are interested in {\dof indefinite growth}, i.e., the event
that every site in $\zt$ eventually becomes active.  The following
is our main result.

\begin{thm}[growth probability]
\label{main} There exists constants $c,C,p_0>0$ such that for
$p<p_0$,
$$
\exp\frac{-2\lambda+c\sqrt p}{p}
\leq \P_p\big(\text{\rm indefinite growth}\big)
\leq \exp\frac{-2\lambda+C\sqrt p\,(\log p^{-1})^3}{p},
$$
where $\lambda=\pi^2/18$.
\end{thm}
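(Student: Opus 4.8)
The plan is to establish the two inequalities separately, in each case adapting the machinery of two-neighbor bootstrap percolation — Holroyd's sharp-threshold analysis and the Aizenman--Lebowitz decomposition together with its refinement into hierarchies — to the present single-cluster setting. Two structural preliminaries underpin everything. First, a finite-propagation fact: since rules (L1)--(L2) count only \emph{active} neighbors, if at all times up to $t$ the active region (which always emanates from the origin) is contained in a rectangle $S$, then the dynamics inside $S$ up to time $t$ are unaffected by the initial configuration outside $S$; a local analogue of the Aizenman--Lebowitz lemma (proved as in the standard case, using this fact for self-containedness) then shows that the active region growing large forces, at every intermediate scale, a smaller rectangle that is ``internally spanned'' by the local rules — necessarily containing the origin. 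Second, a rectangular active droplet of height $b$ advances its right edge (by one or two columns) exactly when the next two columns, restricted to heights $[1,b]$, are not both empty, and is otherwise blocked: this is precisely the ``double gap'' obstruction of two-neighbor bootstrap percolation, which the range-$2$ rule for occupied sites is designed to reproduce. Consequently the probability of adding $m$ columns to a height-$b=y/p$ droplet is $e^{-m\,\beta((1-p)^b)}$ up to lower order, where $\beta(q)=-\log\rho(q)$ is the ``no two adjacent empty columns'' decay rate, and the growth cost is governed by the same integral $\lambda=\int_0^\infty\beta(e^{-t})\,dt=\pi^2/18$ as in the two-neighbor model.

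For the lower bound I would exhibit an explicit event forcing indefinite growth and estimate it from below. A cheap initial event of probability $p^{O(1)}=e^{o(1/p)}$ creates a small active square near the origin. I then prescribe a growth profile — a nested family of rectangles from constant scale up to a critical scale $T\asymp p^{-1}\log p^{-1}$ — and require that along it the droplet is never blocked by a double gap; the exponent of this probability is $-\tfrac1p\int_{\text{path}}\bigl(\beta(e^{-y})\,dx+\beta(e^{-x})\,dy\bigr)$ to leading order, which is maximized over profiles by the diagonal $x=y$, giving $-2\lambda/p$. The improvement $+c/\sqrt p$ is recovered by summing not over a single profile but over the full family of near-optimal profiles (a Laplace-type estimate for the transverse fluctuations of size $\sim p^{-1/2}$ around the diagonal). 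Finally, once the droplet has reached scale $T$, a Borel--Cantelli argument shows it fills $\zt$ with probability at least $\tfrac12$: at a later step a side of length $\ge T$ is blocked only by a double gap of probability $\le e^{-c'pT}$, and $\sum_{s\ge T}s\,e^{-c'ps}<\tfrac12$ once $T$ is a large enough multiple of $p^{-1}\log p^{-1}$. Multiplying the three probabilities gives the claimed lower bound.

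For the upper bound I would show that indefinite growth forces a rare local event. By the finite-propagation fact and the local Aizenman--Lebowitz lemma, on the event of indefinite growth there is, at scale $k\asymp p^{-1}\,\mathrm{polylog}(p^{-1})$, a rectangle $R$ of diameter in $[k,2k]$ containing the origin that is internally spanned. I would bound $\P_p(R\text{ internally spanned})$ by the hierarchy method: disassemble an internally spanned rectangle into a rooted tree of sub-rectangles whose consecutive sizes differ by controlled amounts, bound the probability attached to each tree edge by the probability of no double gap in the corresponding annulus, and sum the resulting product over all admissible hierarchies. Optimizing yields $\exp(-2\lambda/p+C(\log p^{-1})^3/\sqrt p)$, the polylogarithmic factor absorbing the combinatorial complexity of the hierarchies and the slack from working at scale $k$ rather than exactly $p^{-1}$; a union bound over the $O(k^2)=e^{o(1/p)}$ positions of $R$ is negligible.

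The main obstacle is the upper bound. Setting up the local Aizenman--Lebowitz lemma and a workable notion of ``internally spanned'' requires care, because the occupied/active dichotomy and the asymmetric ranges in (L1)--(L2) make the bookkeeping of influence (hence of self-containedness) subtler than in ordinary bootstrap percolation. More seriously, pushing the hierarchy estimate to the stated precision is delicate: obtaining the constant exactly $2\lambda$ (not merely $O(1/p)$) needs the full variational analysis of near-optimal hierarchies, and then every one of the many polylogarithmic losses — in the scale, in the count of hierarchies, in the width estimates for ``good'' annuli — must be tracked so that they aggregate to $o(1/p)$, which is what fixes the error at $C\sqrt p\,(\log p^{-1})^3/p$ in the exponent. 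The lower bound is comparatively routine, the only non-standard ingredient being the Laplace-type computation that produces the $+c/\sqrt p$ gain over the naive single-profile estimate.
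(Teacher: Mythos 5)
Your lower bound matches the paper's (both defer to the deviant-path construction of Gravner--Holroyd, and you correctly identify the transverse-fluctuation/Laplace gain of $c/\sqrt p$). But your upper bound uses the wrong tool, and it is not a small discrepancy. You propose the full hierarchy machinery (internally spanned rectangle at scale $k$, then disassemble into a rooted tree of sub-rectangles). That apparatus was built for standard bootstrap percolation precisely because there are many nucleation sites and disjoint spanned rectangles can merge; the tree records the mergers. In the local model there is a \emph{single} active cluster emanating from the origin, so any set of eventually-active sites is at every moment a rectangle containing the origin, and the evolution is the linear ``rectangle process.'' Your tree would degenerate to a path, but you have not recognized this, and crucially the tree formulation carries far more entropy than the problem has. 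The paper exploits the single-cluster structure directly: Lemma~\ref{keyinclusion} says that on indefinite growth either a small ``escape'' event $E$ occurs or there is a \emph{good sequence} $R_1\subseteq\cdots\subseteq R_{n+1}$ of rectangles (not a tree), coarse-grained so that each step multiplies the side length by roughly $1+\sqrt q$, with the intermediate events $G(R_1)$ and $D(R_i,R_{i+1})$ independent. The choice of step size $\sim\sqrt q\cdot(\text{side length})$ is the heart of the matter --- it balances the entropy of sequences (Lemma~\ref{entropy}: $e^{O(q^{-1/2}(\log q^{-1})^2)}$) against the approximation error in replacing the true transition probability by the no-double-gap bound; your sketch says nothing about how this tuning happens, and the hierarchy method as stated would not naturally deliver the $(\log p^{-1})^3$ factor.

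You also omit two genuine obstacles that the paper must handle. First, the escape event $E$: the rectangle process can leave the good region $T=\{a,b\ge A,\;a+b\le B\}$ near a corner without ever having both coordinates large, and that possibility must be bounded separately (Lemma~\ref{Ebound}); your Aizenman--Lebowitz localization at scale $k$ does not capture this. Second, the corner complication in bounding $\P_p[D(R,R')]$: the four corner sub-rectangles $S_1,S_3,S_5,S_7$ contain sites that can be occupied and hence assist growth in two directions simultaneously, producing the extra additive term $stq\,e^{2[g(bq)+g(aq)]}$ in Lemma~\ref{border}. Controlling that term is what forces the definitions of $A,B$ and the coarse-graining width, and it is responsible for the cubed logarithm in the final bound. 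Without confronting it, the upper bound proof does not close. In short: right ingredients (no-double-gap cost, variational principle yielding the diagonal, $\lambda=\pi^2/18$), but the upper-bound architecture is imported from the multi-cluster theory and misses the one-dimensional rectangle-process structure and the specific error bookkeeping that actually produce the $C\sqrt p(\log p^{-1})^3$ term.
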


\paragraph{Bootstrap percolation.}
As suggested by its name, the process we have introduced is a
close relative of (standard) {\dof bootstrap percolation}.  This
well-studied model may be described in our setting as follows.
Initially, each site in the square $\{1,\dots, L\}^2$ is
independently active with probability $p$ and empty otherwise, and
all sites outside the square are empty.  The configuration then
evolves according to rules (L2,L3) ((L1) being irrelevant), and
one is interested in the probability $I(L,p)$ that the entire
square eventually becomes active.

Bootstrap percolation has been studied both rigorously and
numerically \cite{adler-vis, a-s-a, aizenman-lebowitz,van-enter},
both in its own right and as a tool to analyze other models
\cite{martinelli, dlbd, dld, fontes-schonmann-sidoravicius,
frobose}. An important property is that, asymptotically as $p\to
0$, it undergoes a sharp metastability transition as the parameter
$p\log L$ crosses the threshold $\lambda$.  More precisely, as
proved in \cite{h-boot}, $I(L,p)$ converges to $0$ or $1$ if
respectively $p\log L <\lambda-\epsilon$ or $p\log L
>\lambda+\epsilon$ (also see \cite{h-boot, h-l-r} for similar
results on related models).  However, simulations for moderate
values of $p$ show surprising discrepancies with this rigorous
result \cite{dlbd,dld,gh-slow,stauffer}, and our motivation in
this article is to advance understanding of this latter
phenomenon.

Local bootstrap percolation has been an implicit ingredient in
many arguments involving bootstrap percolation, including those in
\cite{aizenman-lebowitz, gh-slow, h-boot}, and a variant form appears
explicitly in \cite{dlbd,dld} (see the discussion below).  The
proof of the main result from \cite{h-boot} already implies that
for the local model, the quantity  
$$- p\,\log\P_p(\text{indefinite growth})$$
converges to $2\lambda$ as $p\to 0$, and the
main contribution of this paper is to identify the speed of this
convergence up to logarithmic factors.

Heuristically, the dominant mechanism for active sites to take
over space in bootstrap percolation is the presence of widely
separated, and essentially independent, local bootstrap
percolations. Indeed, for small $p$, bootstrap percolation
immediately fixates (stops changing) on the overwhelming
proportion of the lattice; the rare nuclei that facilitate growth
hence encounter configurations of fixated active sites, which
should be not very different from occupied sites in the local
version. This suggests that $I(L,p)$ makes the transition from
near 0 to near 1 as $L^2\,\P_p(\text{indefinite growth})$ changes
from small to large, and thus when $2p\log L \approx -p\log
\P_p(\text{indefinite growth})$. In this subject, claims not
backed by rigorous arguments have a questionable track record;
nevertheless, motivated by Theorem \ref{main}, we venture the
following conjecture.
\begin{conj}\label{conj}
In the standard bootstrap percolation model, let $L\to\infty$ and
$p\to 0$ simultaneously. Then,
with $\lambda=\pi^2/18$ as above and any $\epsilon>0$, we have
$$
\text{if}\quad L<\exp\frac{\lambda-p^{1/2-\epsilon}}{p} \qquad
\text{then}\quad I(L,p)\to 0.
$$
\end{conj}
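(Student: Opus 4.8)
}
The plan is to make the heuristic above rigorous by bounding $I(L,p)$ with a union bound over potential nucleation sites in the box, using the Aizenman--Lebowitz hierarchy \cite{aizenman-lebowitz,h-boot} to reduce the event that $[1,L]^2$ is internally spanned to the event that some internally spanned \emph{critical droplet} appears, and then feeding in the sharp local estimate of Theorem~\ref{main}. Fix $\epsilon>0$. By the metastability threshold of \cite{h-boot} we already know $I(L,p)\to 0$ when $p\log L<\lambda/2$, so we may assume $p\log L\ge\lambda/2$; then $L$ is far larger than $K:=\lceil p^{-1}(\log p^{-1})^{2}\rceil$, a length comfortably above the critical droplet scale (of order $p^{-1}\log p^{-1}$), though still only polynomial in $p^{-1}$. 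By the Aizenman--Lebowitz lemma, if $[1,L]^2$ is internally spanned then some internally spanned rectangle $R\subseteq[1,L]^2$ has longer side in $[K,2K]$, and a union bound over the at most $L^2$ positions and at most $(2K)^2$ shapes of such a rectangle yields
$$
I(L,p)\ \le\ L^2\,(2K)^2\,\max_R\,\P_p\big(R\ \text{is internally spanned}\big),
$$
the maximum being over rectangles with longer side in $[K,2K]$.

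The crux is then to show that, for every such $R$,
$$
\P_p\big(R\ \text{is internally spanned}\big)\ \le\ \exp\frac{-2\lambda+C'\sqrt p\,(\log p^{-1})^{A}}{p}
$$
for suitable constants $C',A>0$ --- that is, the per-droplet nucleation probability in the \emph{standard} model should satisfy essentially the bound that Theorem~\ref{main} establishes for $\P_p(\text{indefinite growth})$ in the \emph{local} model. This ought to be provable by the method behind the upper bound in Theorem~\ref{main}: an internally spanned rectangle forces, through a further use of the Aizenman--Lebowitz hierarchy, a nested family of internally spanned rectangles growing from side $O(1)$ up to side $\Theta(K)$, and the probabilistic cost of this growth is the very one that produces the exponent $2\lambda$ in the local analysis (the factor $2$ reflecting simultaneous growth in both coordinate directions). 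Fixated active clusters of the standard dynamics here play the role that occupied sites play in the local model, so the $\sqrt p\,(\log p^{-1})^{O(1)}$ correction of Theorem~\ref{main} should carry over.

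Granting this estimate, and using the hypothesis $L\le\exp\frac{\lambda-p^{1/2-\epsilon}}{p}$, hence $L^2\le\exp\frac{2\lambda-2p^{1/2-\epsilon}}{p}$, we obtain
$$
I(L,p)\ \le\ (2K)^2\,\exp\frac{-2p^{1/2-\epsilon}+C'\sqrt p\,(\log p^{-1})^{A}}{p}.
$$
The exponent here equals $-2p^{-1/2-\epsilon}+C'p^{-1/2}(\log p^{-1})^{A}$, which tends to $-\infty$ as $p\to 0$ because $p^{-\epsilon}$ outgrows every power of $\log p^{-1}$; since $(2K)^2$ is only polynomial in $p^{-1}$, it follows that $I(L,p)\to 0$, which is the conjecture.

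The main obstacle is the crux estimate, and two points stand out. First, unlike in the local model, an internally spanned $R$ need not be produced by a single growing cluster: several well-separated sub-droplets might grow independently and merge only late, and excluding a cheaper multi-seed mechanism calls for a delicate renormalisation argument, in the spirit of \cite{aizenman-lebowitz} and the proof of Theorem~\ref{main}. Second, and more delicately, obtaining a correction term genuinely smaller than $p^{1/2-\epsilon}$ --- rather than the much weaker error implicit in the bare threshold theorem --- requires a precise treatment of the crossover near the critical length, where one must match the ``sparse row'' and ``dense row'' regimes, and reconcile occupied sites with fixated active clusters, accurately enough. That additional work, going beyond what Theorem~\ref{main} already supplies for the purely local dynamics, is where the difficulty of the conjecture resides.
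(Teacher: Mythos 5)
The statement you are addressing is explicitly a \emph{conjecture} in the paper: the authors do not prove it, and in fact frame it as exactly the open question left by Theorem~\ref{main} together with the complementary lower bound from \cite{gh-slow}. Your write-up correctly and honestly reflects this: the Aizenman--Lebowitz reduction to critical droplets and the union bound over translates are standard preliminaries, and everything then rests on the ``crux estimate''
$$
\P_p\big(R\ \text{is internally spanned}\big)\ \le\ \exp\frac{-2\lambda+C'\sqrt p\,(\log p^{-1})^{A}}{p}
$$
for rectangles $R$ at (or just above) the critical-droplet scale of the standard model. You state plainly that you cannot prove this, so what you have is a reduction of the conjecture to a closely related unproved estimate, not a proof --- which is consistent with the statement's status as a conjecture.

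It is worth stressing that the remaining gap is substantial and not a bookkeeping matter. The best rigorously known bound in the standard model for such a rectangle is $\exp[(-2\lambda+o(1))/p]$ with an $o(1)$ term far weaker than the conjectured $O(\sqrt p\,(\log p^{-1})^A)$; Theorem~\ref{main} yields the sharp correction only for the local model started from a single active seed at the origin. As you observe, transferring it runs into the multi-seed mechanism --- several well-separated sub-droplets growing and merging late --- which has no analogue in the local model and is precisely the obstruction that the paper's methods do not surmount. Likewise, the heuristic equivalence between occupied sites in the local model and fixated active clusters in the standard model is motivation, not a lemma. Until such an estimate (or some other route) is established, the conjecture remains open, and your proposal, correctly, claims no more than that.
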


The complementary bound, namely,
$$
\text{if}\quad L>\exp \frac{\lambda-c\,p^{1/2}}{p} \qquad
\text{then}\quad I(L,p)\to 1,
$$
for a small enough $c>0$,
was proved in \cite{gh-slow}, so Conjecture \ref{conj} states that
the power of $p$ in the correction term is exactly $1/2$.

\paragraph{Rectangle process.}
It is natural to consider the following variant of the growth
model defined by (L1--L3), in which we update states in a
different order.  A {\dof rectangle} is a set of sites of the form
$R=\{a,\ldots,c\}\times\{b,\ldots,d\}\subset\zt$.  The {\dof
rectangle process} is a (random) sequence of rectangles
$\rho_0\subseteq\rho_1\subseteq\rho_2\subseteq\cdots$ defined in
terms of the initial configuration $\sigma$ as follows.  If
$\sigma(0)=\circ$ then we set $\rho_i=\emptyset$ for all $i$.  If
$\sigma(0)=\star$ then we set $\rho_0=\{0\}$, and then proceed
inductively as follows.  If
$\rho_i=\{a,\ldots,c\}\times\{b,\ldots,d\}$, then consider the
configuration in which every site in $\rho_i$ is active, every
site outside the enlarged rectangle
$\rho_i^+:=\{a-2,\ldots,c+2\}\times\{b-2,\ldots,d+2\}$ is empty,
and all sites in $\rho_i^+\setminus\rho_i$ have the same state as
in $\sigma$.  Apply the update rules (L1--L3) to this
configuration until the configuration stops changing.  It is
readily seen that the resulting set of active sites is again a
rectangle; let $\rho_{i+1}$ be this set.

It is straightforward to check that the set of eventually active
sites in the local bootstrap percolation model is identical to the
limiting rectangle $\lim_{i\to\infty} \rho_i$, and in particular
indefinite growth occurs if and only if the latter equals $\zt$.

The rectangle process can be described via a
count\-able-state-space Mar\-kov chain, whose state represents the
current rectangle together with information about which sites on
its sides have been examined.  In principle, this allows for a
computational approach to estimating the probability of indefinite
growth; as we do not presently pursue this, we will not give a
detailed description of the chain.

\paragraph{Variant models.}
In addition to the standard model, our methods adapt to the
following two variants.  In the {\dof modified local model}, we
replace rule (L1) with:
\begin{trivlist}
\item(L1F) Each $\bullet$ becomes $\star$ if it has at least one
$\star$ within $\ell^\infty$-distance 1.
\end{trivlist}
In the {\dof Frob\"ose local model}, we replace (L1) with:
\begin{trivlist}
\item(L1M) Each $\bullet$ becomes $\star$ if it has at least one
$\star$ within $\ell^1$-distance 1.
\end{trivlist}
These models may be regarded as local versions of {\dof modified
bootstrap percolation} \cite{h-boot,h-mod,schonmann} (in which
an empty site becomes active if it has at least one active
horizontal neighbor and at least one active vertical neighbour),
and {\dof Frob\"ose bootstrap percolation} \cite{frobose} (in
which activation of a site requires a horizontal neighbor,
a vertical neighbor, and the diagonal neighbor between
them all to be active).
\begin{thm}[growth probability for variant models]
\label{main-fm} Theorem \ref{main} holds for the modified and
Fro\-b\"o\-se local models, but with $\lambda=\pi^2/6$.  For the
Frob\"ose model the upper bound can be improved to
$\exp[(-2\lambda+C\sqrt p\,(\log p^{-1})^2)/p]$.
\end{thm}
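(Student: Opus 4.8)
\emph{Sketch of the proof.} The plan is to re-run the proof of Theorem~\ref{main} with essentially no change to the probabilistic skeleton, since the activation rule enters the argument in only one place. One works with the rectangle process $\rho_0\subseteq\rho_1\subseteq\cdots$; the reduction of indefinite growth to the event that this process survives past the critical scale $\sim p^{-1}\log p^{-1}$, the comparison that makes a surviving droplet almost surely fill $\zt$ beyond that scale, and the entropy bookkeeping over possible trajectories in the upper bound, are all insensitive to whether one uses (L1), (L1F) or (L1M). The rule enters only through the function $\beta(\cdot)$ giving the probability that a prescribed side of the current rectangle advances by a prescribed amount, and the variational constant is $\lambda=\int_0^\infty -\log\beta(z)\,dz$ (suitably interpreted).

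So the first task is to recompute $\beta$ for (L1F) and (L1M). The point is that (L1) reaches occupied sites at $\ell^1$-distance $2$, so in the standard model the two outermost rows (or columns) of the annulus $\rho_i^+\setminus\rho_i$ receive occupied-site activations \emph{simultaneously}; this is what lets a side advance two units at a \emph{double-row} cost and produces the crossing function with $\int_0^\infty-\log\beta=\pi^2/18$. Under (L1F) ($\ell^\infty$-distance $1$) and (L1M) ($\ell^1$-distance $1$) the reach is only $1$, so a side advances one unit at a time, and --- just as for non-local modified and Frob\"ose bootstrap --- the probability that a width-$k$ row is crossed is $1-(1-p)^k$ up to lower-order terms. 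Feeding $\beta(z)=1-e^{-z}$ into the variational formula gives
$$
\lambda=\int_0^\infty-\log\bigl(1-e^{-z}\bigr)\,dz=\frac{\pi^2}{6},
$$
and the lower and upper bounds then go through verbatim with this $\lambda$, the correction terms $c\sqrt p$ and $C\sqrt p(\log p^{-1})^3$ retaining the same orders (they come from the two ends of the optimal trajectory and from the trajectory entropy, neither depending on the rule).

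To obtain the improved upper bound for the Frob\"ose model one extra ingredient is needed: the (L1M)/(L2) dynamics is combinatorially rigid, in the way familiar from Frob\"ose bootstrap (whose stable sets are unions of rectangles). A side-advance event then factorizes with \emph{no} cascading corrections, so the crossing probabilities are exact rather than correct only up to a $(\log p^{-1})$-factor; one can then organize the hierarchical step of the upper bound so that it loses only $(\log p^{-1})^2$. I expect the modified model to resist this: (L1F) still activates occupied sites across a diagonal, which reinstates the cascading that forces the cruder bound.

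As in Theorem~\ref{main}, the main obstacle is the upper bound --- specifically, proving the side-advance estimates for (L1F) and (L1M) \emph{with} explicit error control, and then ruling out exotic trajectories of the rectangle process that might beat the near-square one by more than the claimed polylogarithmic factor. The lower bound is comparatively routine: one exhibits a near-optimal deterministic growth schedule, estimates its probability as a product of crossing probabilities, and checks that the resulting droplet grows on to fill $\zt$ with probability bounded away from $0$.
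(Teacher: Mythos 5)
Your identification of the new crossing function $f(z)=-\log(1-e^{-z})$ and the resulting constant $\lambda=\int_0^\infty f=\pi^2/6$ is correct, as is the overall framing. But the claim that "the lower and upper bounds then go through verbatim with this $\lambda$" hides a genuine gap: the upper bound does \emph{not} go through unchanged. The key difficulty lies in Lemma~\ref{border}. For the standard model the corner-correction factor is $e^{stq\,e^{2[g(bq)+g(aq)]}}$, and Lemma~\ref{gbound} controls this because $e^{2g(z)}\sim 1/z$ as $z\to 0$. If you simply replace $g$ by $f$, you get $e^{2f(z)}\sim 1/z^2$, and feeding this through Lemma~\ref{sumbound} produces a divergent (polynomial in $1/q$) error rather than the required $q^{-1/2}\,\mathrm{polylog}$. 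The fix is that, precisely because "no double gaps" is replaced by "no empty column (or row)", each occupied corner site in the proof of Lemma~\ref{border} now "eats" only one column/row instead of two, so the binomial splitting yields the improved bound with $e^{1\cdot[f(bq)+f(aq)]}$ in place of $e^{2[\cdots]}$. The paper emphasizes that this replacement of the constant $2$ by $1$ is not an optional optimization---the modified-model argument fails without it---and your proposal does not notice that any such change is needed.

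For the Frob\"ose model your "combinatorial rigidity" intuition points in the right direction but doesn't supply the mechanism. The actual argument in the paper is that the vertical-progress and horizontal-progress crossing events ($S_1\cup S_2\cup S_3$, $S_5\cup S_6\cup S_7$ with no empty rows; $S_1\cup S_7\cup S_8$, $S_3\cup S_4\cup S_5$ with no empty columns) occur \emph{disjointly}, hence are negatively correlated by the van den Berg--Kesten inequality. This eliminates the corner-correction term of Lemma~\ref{border} altogether, renders Lemma~\ref{gbound} unnecessary, and is exactly what drops one power of $\log p^{-1}$. Without invoking BK (or some equivalent), the claimed "factorization with no cascading corrections" is unjustified: the two crossing events share the corner blocks $S_1,S_3,S_5,S_7$, so they are not independent.
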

The proof of Theorem \ref{main-fm} follows the same steps as that
of Theorem \ref{main} (with a few simplifications).  We therefore
omit the details and instead summarize the differences in Section
\ref{sec:mod}.

\paragraph{Comparison with numerical results.}
For the modified and Frob\"ose models, the resulting rectangle
process, together with the associated Markov chain, is much
simpler, which makes a computational approach more inviting.
Indeed, in \cite{dld}, using computer calculations together with
heuristic arguments, the authors obtained the estimates
$$
-p\,\log\P_p(\text{indefinite growth}) \approx 2\lambda-6.22\,
p^{0.333}
$$
for the modified local model, and
$$
-p\,\log\P_p(\text{indefinite growth}) \approx 2\lambda- 5.25\,
p^{0.388}
$$
for the Frob\"ose local model, as $p\to 0$.  These estimates seems
to fit well down to $p\approx 0.01$, but Theorem \ref{main-fm}
contradicts them in the limit, because the estimated powers 0.333
and 0.388 are less than 1/2.

In a similar vein, in \cite{stauffer}, using interpolation between
simulations and the rigorous result of \cite{h-boot}, it was
estimated for the standard (non-local) bootstrap percolation
model, that the metastability transition occurs at
$$p\log L \approx \lambda - 0.51\,p^{0.2}.$$
This would again be inconsistent in the limit $p\to 0$ with our
Conjecture \ref{conj}.

These deceptively simple models thus present yet another puzzle of
the type ``why are we not able to see the asymptotic behavior in
simulations?''  That computer simulations can be so misleading
\cite{h-boot,van-enter} is arguably the primary lesson learned
from more than two decades of research into bootstrap percolation
by mathematicians and physicists, and the present article is
another contribution to this theme.

\paragraph{Outline of proof.}
We conclude the introduction with a brief outline of the rest of
the paper.  The bulk of the work, contained in Section
\ref{sec:ub}, is in proving the upper bound in Theorem \ref{main}. We
consider the rectangle process as the side length grows from
$1/\sqrt p$ to $p^{-1}\log p^{-1}$.  We sample this process in a
sequence of coarse-grained steps, in which the side length
increases by a factor of roughly $1+\sqrt p$ at each step.  These
step sizes are chosen to balance the entropy factors (related to
the number of possibilities), and errors arising from the
replacing true probabilities of growth with larger quantities. The
latter are probabilities of ``no double gap'' conditions for
growth from a smaller to a larger rectangle, as in \cite{h-boot}.
One complication, contributing to the logarithmic error factor
between our upper and lower bounds, is that corner sites between
the two rectangles may allow growth in two directions.

The rectangle process can grow via rectangles of unequal
dimensions; however, by the variational principle of
\cite{h-boot}, the most likely scenario is the most symmetric one,
i.e., growth of squares. This yields the value of the constant
$\lambda$.

The lower bound in Theorem \ref{main} is a straightforward
consequence of methods from \cite{gh-slow}, which we summarize in
Section \ref{sec:lb}.  The idea is to consider paths in the space
of rectangle sizes that deviate from symmetric growth by
approximately $1/\sqrt p$ at scale $1/p$ (and are therefore not
prohibitively improbable).  The increased entropy from such paths
is enough to introduce an additional multiplicative factor of
$\exp(c/\sqrt p)$ to the bound on the growth probability.

\section{Upper Bound}
\label{sec:ub}

It will be convenient to assume throughout that the probability
$p$ is sufficiently small, e.g., $p<0.1$  suffices.  Then we
denote
\begin{align*}
q=q(p)&:=-\log(1-p); \\
A=A(q)&:=\lceil 1/\sqrt q \rceil; \\
B=B(q)&:=\lfloor q^{-1}\log q^{-1}\rfloor.
\end{align*}
Note that $q=p+p^2/2+p^3/3+\cdots$ for small $p$.  Throughout, we
use $c, c_1,c_2,\dots$ and $C, C_1, C_2,\dots$ to denote positive
(respectively small and large) absolute constants, which may in
principle be explicitly computed. For a rectangle
$R=\{a,\ldots,c\}\times\{b,\ldots,d\}$ we denote its {\dof
dimensions} $\text{dim}(R)=(c-a+1,d-b+1)$.

We will bound the probability of indefinite growth by summing over
possible trajectories for the rectangle process.  However, the
total number of trajectories is too large, so we need the
following concept. Take a sequence of rectangles $R_1, \dots,
R_{n+1}$, where $n\geq 1$. Write $\text{dim}(R_i)=(a_i,b_i)$ and
$s_i:=a_{i+1}-a_i$ and $t_{i}:=b_{i+1}-b_i$. We call the sequence
{\dof good} if it has the following properties. \pagebreak
\begin{mylist}
\item $0\in R_1\subseteq R_2 \subseteq\dots\subseteq R_{n+1}$.
\item $\min(a_1,b_1)\in [A, A+3]$.
\item $a_n+b_n\le B$.
\item $a_{n+1}+b_{n+1}>B$.
\item For $i=1,\dots, n$, either
$s_i\ge a_i\sqrt q$ or $t_i\ge b_i\sqrt q$.
\item For $i=1,\dots, n$, both
$s_i< a_i\sqrt q+4$ and $t_i< b_i\sqrt q+4$.
\end{mylist}

We next define some useful events. The {\dof columns} of a
rectangle $R=\{a,\ldots,c\}\times\{b,\ldots,d\}$ are the sets
$\{a\}\times\{b,\ldots,d\},\ldots, \{c\}\times\{b,\ldots,d\}$.  We
say that $R$ has a {\dof double gap in the columns} if two
consecutive columns are entirely empty in the {\em initial}
configuration $\sigma$.  Double gaps in the {\dof rows} are
defined similarly.  For a rectangle $R$, define the event
$$
G(R):=\{R \text{ has no double gaps in the columns or rows}\}.
$$

Moreover, we recall the following definition from \cite{h-boot}.
For two rectangles $R\subseteq R'$, define subrectangles
$S_1,\ldots ,S_8$ (some of which may be empty) according to Figure
\ref{r1r8}, so that $R'$ is the disjoint union of
$R,S_1,\dots,S_8$. Define $D(R,R')$ to be the event that each of
the two rectangles $S_1\cup S_8 \cup S_7$ and $S_3\cup S_4\cup
S_5$ has no double gaps in the columns, and each of the two
rectangles $S_1\cup S_2 \cup S_3$ and $S_7\cup S_6\cup S_5$ has no
double gaps in the rows. The idea is that this event is necessary
for the growth to proceed from $R$ to $R'$.
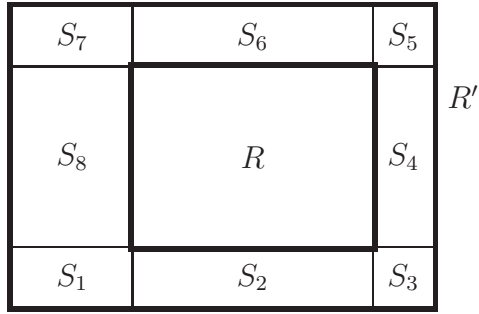
\begin{figure}
\begin{center}
\setlength{\unitlength}{8mm}
\begin{picture}(7,5)(0,0)
\put(2,0){\line(0,1){5}}
\put(6,0){\line(0,1){5}}
\put(0,1){\line(1,0){7}}
\put(0,4){\line(1,0){7}}
\linethickness{1.6pt}
\put(0,0){\framebox(7,5){}}
\put(2,1){\framebox(4,3){$R$}}
\put(0,0){\makebox(2,1){$S_1$}}
\put(2,0){\makebox(4,1){$S_2$}}
\put(6,0){\makebox(1,1){$S_3$}}
\put(6,1){\makebox(1,3){$S_4$}}
\put(6,4){\makebox(1,1){$S_5$}}
\put(2,4){\makebox(4,1){$S_6$}}
\put(0,4){\makebox(2,1){$S_7$}}
\put(0,1){\makebox(2,3){$S_8$}}
\put(6.5,3){\makebox(2,1){$R'$}}
\end{picture}
\end{center}
\caption{The rectangles $S_1,\ldots, S_8$.}
\label{r1r8}
\end{figure}

Finally, define the event
$$
\begin{aligned}
E:=\bigcup\bigg\{G(R):\;
\begin{array}{l}
R \text{ contains $0$ and has one dimension in} \\
\text{$[B-A-10, B-A]$ and the other in $[1,A]$}
\end{array}
\bigg\}.
\end{aligned}
$$

\begin{lemma}[key inclusion]
\label{keyinclusion} If indefinite growth occurs, then either $E$
occurs, or else there exists a good sequence $R_1,\dots, R_{n+1}$
of rectangles such that
$$G(R_1) \cap \bigcap_{i=1}^n D(R_i,R_{i+1})$$
occurs.
\end{lemma}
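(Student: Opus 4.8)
The strategy is to read the required sequence off the rectangle process. If indefinite growth occurs then $\sigma(0)=\star$, so $\rho_0=\{0\}$, the sequence $\rho_i$ increases to $\zt$, and in particular both dimensions of $\rho_i$ tend to infinity. I would first record two elementary facts about the process. \emph{(a) Bounded steps:} $\rho_{i+1}\subseteq\rho_i^+$ for every $i$ — a site outside the rectangle $\rho_i^+$ has at most one neighbour in it, so it cannot be activated by (L2), while (L1) never applies to empty sites and everything outside $\rho_i^+$ is empty — and hence each dimension grows by at most $4$ per step. \emph{(b) Double gaps are impassable:} suppose two consecutive columns are entirely empty, in the initial configuration $\sigma$, over some range $H$ of rows, and suppose $R'$ is a rectangle with row range $H$ reached by the rectangle process from a rectangle lying, in the column direction, entirely on one side of these two columns; then $R'$ lies on that same side. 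The point is that throughout the finitely many stages producing $R'$, the active set stays inside $R'$ (active sites persist within a stage and every stage output is contained in $R'$), so it meets the two empty columns only within the rows $H$, where they are genuinely empty; hence at a first putative activation inside those columns, (L1) is void and (L2) sees at most one active neighbour, a contradiction. The analogous statement holds with rows and columns exchanged.

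Fact (b) yields both event-inclusions at once. Applied with $R'=R_1$ (which is reached from $\{0\}$): a double gap in the columns of $R_1$ cannot contain the column of the active origin, so it lies strictly to one side of it, and (b) forbids $R_1$ from containing it; the same holds for rows, so $G(R_1)$ occurs. Applied with $R'=R_{i+1}$ reached from $R_i$: each of the four strips in Figure \ref{r1r8} consists of the columns (resp.\ rows) of $R_{i+1}$ lying beyond $R_i$ on one side, spanning the full row range (resp.\ column range) of $R_{i+1}$; so a double gap of the appropriate orientation in any of these strips would, by (b), be a permanent barrier that $R_{i+1}$ cannot contain — a contradiction. Hence $D(R_i,R_{i+1})$ occurs for each $i$.

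It remains to choose the indices so that (i)--(vi) hold. Let $k_1$ be the first time the smaller dimension of $\rho_i$ reaches $A$; by (a) it then lies in $[A,A+3]$, giving (ii). Let $i^\star$ be the first time the larger dimension reaches $B-A-10$ (finite, since dimensions diverge). If at time $i^\star$ the smaller dimension is still at most $A$, then $R:=\rho_{i^\star}$ contains $0$, has one dimension in $[B-A-10,B-A]$ by (a) and the other in $[1,A]$, and satisfies $G(R)$ by the argument of the previous paragraph; so $E$ occurs and we are done. Otherwise $k_1\le i^\star$ and a short calculation with (a) gives $a_1+b_1\le B$. Set $R_1:=\rho_{k_1}$, and for $j\ge 1$ let $k_{j+1}$ be the first time after $k_j$ at which $\rho_i$ has some dimension at least $1+\sqrt q$ times the corresponding dimension of $R_j$ (this exists since dimensions diverge, and is strictly after $k_j$), and put $R_{j+1}:=\rho_{k_{j+1}}$. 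By minimality of $k_{j+1}$ together with (a), one dimension of $R_{j+1}$ has grown relative to $R_j$ by at least the factor $1+\sqrt q$, while neither has grown by more than $a_j\sqrt q+4$, resp.\ $b_j\sqrt q+4$; these are precisely (v) and (vi). Finally let $R_{n+1}$ be the first term of this sub-sequence with dimension sum exceeding $B$; since $a_1+b_1\le B$ and the sums diverge, this determines $n\ge 1$ with $a_n+b_n\le B<a_{n+1}+b_{n+1}$, giving (iii)--(iv), while (i) is immediate from nesting. Then $G(R_1)\cap\bigcap_{i=1}^n D(R_i,R_{i+1})$ occurs, as required.

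The main obstacle is Fact (b): establishing rigorously that a pair of empty columns is a genuine barrier for the local dynamics. The crucial point — and the reason the local model behaves like the standard one here — is that rule (L1) can carry activity across a \emph{single} empty column (an occupied site two columns away gets activated), so only a pair of consecutive empty columns blocks growth; and one must use that during the relevant stages the active set is confined to the target rectangle, which is exactly what rules out ``bridging'' a double gap near the top or bottom of a strip. (These same corner regions of the strips reappear, as the source of the logarithmic discrepancy between the two bounds, in the probability estimates of the following sections; but that is separate from the combinatorial inclusion proved here.) The remaining ingredients — Fact (a), the dichotomy producing $E$, and the bookkeeping for (i)--(vi) — are routine.
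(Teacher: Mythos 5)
Your proof is correct and follows essentially the same approach as the paper: establish that a pair of empty columns (or rows) is an impassable barrier for the process confined to a given rectangle's row (or column) range, deduce that $G(R)$ (and hence $D(R_i,R_{i+1})$) holds for every encountered rectangle, and then build the good sequence by sampling the rectangle process at the first entry into the good region and at each subsequent multiplicative increment of $1+\sqrt q$, with the $E$ alternative covering the ``escape near a corner'' case. Your treatment is somewhat more explicit than the paper's (separate stopping times $k_1$ and $i^\star$ for the escape dichotomy, and a direct verification of $D(R_i,R_{i+1})$ rather than the inclusion $G(R_{i+1})\subseteq D(R_i,R_{i+1})$), but the substance is identical.
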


\begin{proof}
We start with the following observation: if the rectangle process
ever encounters a given rectangle $R$, then $G(R)$ occurs.  To see
this, suppose on the contrary that $R$ has two consecutive vacant
columns, say.  If these columns contain the origin then no growth
occurs.  Otherwise, so long as the growing rectangle includes no
site above or below $R$, it contains no site in the two columns.
This follows by induction on the steps of the rectangle process
(no $\circ$ in the two columns can have two adjacent $\star$'s
within $R$, while no $\bullet$ outside $R$ can contribute to a
site becoming $\star$ without first becoming $\star$ itself).

Now define the {\dof good region\/} for dimensions of rectangles
to be
$$
T=\big\{(a,b)\in\zt: a,b\ge A \text{ and } a+b\le B\big\}
$$
(see Figure \ref{gs}). Note that in the sequence of dimensions
$(\dim(\rho_j))_{j\geq 0}$ for the rectangle process, each
coordinate increases by at most $4$ at each step, while if
indefinite growth occurs then at least one coordinate increases by
at least $1$ at every step, and both coordinates tend to $\infty$.
\begin{figure}[t]
\begin{center}
\input{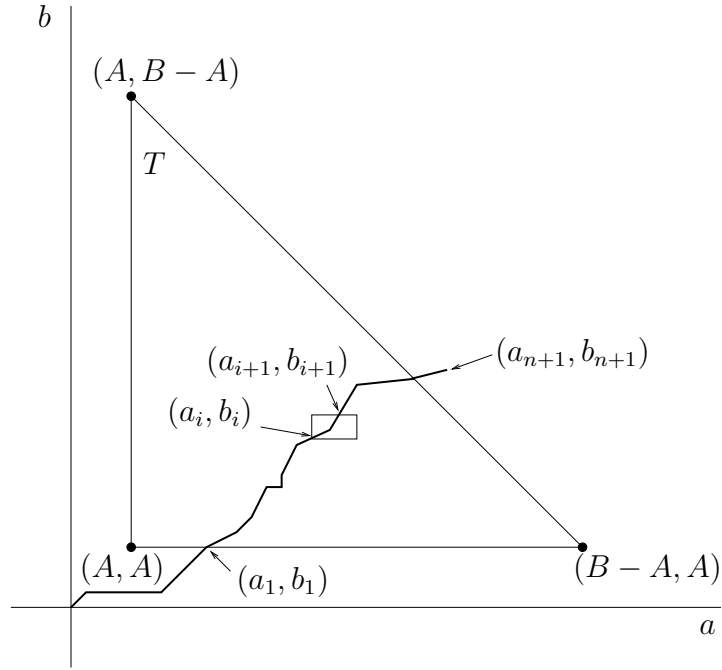}
\end{center}
\caption{The sequence of dimensions $(a,b)$ of the rectangle
process (bold curve), as it passes through the good region $T$.
The small rectangular window has dimensions $(a_i\sqrt q, b_i\sqrt
q)$.} \label{gs}
\end{figure}

Therefore, assuming indefinite growth, if $(\dim(\rho_j))_{j\geq
0}$ never enters $T$, then $E$ must occur, since the sequence must
``escape'' near a corner of the good region.  On the other hand,
if the sequence does enter $T$, then let $R_1$ be the first
$\rho_j$ such that $\dim(\rho_j)\in T$.  Then property (ii) of a
good sequence will be satisfied.  Thereafter, define $R_2,\ldots,
R_{n+1}$ iteratively as follows.  Assuming $\dim(R_i)=(a_i,b_i)$,
let $R_{i+1}$ be the first rectangle encountered by the rectangle
process after $R_i$ such that either $s_i\geq a_i\sqrt q$ or
$t_i\geq b_i\sqrt q$ (where $(s_i,t_i)=\dim(R_{i+1})-\dim(R_i)$).
This ensures (v),(vi) are satisfied.  Continue in this way,
stopping at the first such rectangle, $R_{n+1}$, whose dimensions
are outside $T$.  Then (iii),(iv) are satisfied.

The rectangles thus constructed form a good sequence. From the
above observations, $\cap_{i=1}^{n+1} G(R_i)$ occurs, and since
$D(R,R')\subseteq G(R')$ the result follows.
\end{proof}

\vfill\eject

Next we bound the probabilities of the events appearing in Lemma
\ref{keyinclusion}.  We introduce the following functions from
\cite{h-boot}:
$$\beta(u)=\frac{u+\sqrt{u(4-3u)}}{2} \quad\text{and}\quad
g(z)=-\log \beta(1-e^{-z}).
$$
The threshold $\lambda$ is determined by the integral (see
\cite{h-boot}, and also \cite{h-l-r})
\begin{equation}\label{integral}
\int_0^\infty g(z)\;dz = \lambda=\frac{\pi^2}{18}.
\end{equation}

\begin{lemma}[double gaps]
\label{basicbound}
If $R$ is a rectangle with $\dim(R)=(a,b)$,
$$
\P_p\big(R \text{\rm\ has no double gaps in the columns}\big)\le
\exp\big[-(a-1)g(bq)\big].
$$
For rows we have the same bound with $a$ and $b$ exchanged.
\end{lemma}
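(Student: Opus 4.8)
The plan is to reduce the statement to a one-dimensional problem about runs in an i.i.d.\ Bernoulli sequence, and then to prove the resulting bound by an elementary induction whose key input is that $\beta(1-r)$ is the dominant eigenvalue of the relevant recursion.

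First I would observe that the $a$ columns of $R$ consist of pairwise disjoint sets of $b$ sites each, and that a given site is empty in $\sigma$ with probability $1-p$ independently of all others (including the origin, by the definition of $\P_p$). Hence the events $X_i:=\{\text{column }i\text{ is entirely empty}\}$, $i=1,\dots,a$, are independent, each of probability $(1-p)^b=e^{-bq}$; write $r:=e^{-bq}$. The event that $R$ has no double gap in the columns is precisely the event that the binary sequence $(X_1,\dots,X_a)$ contains no two consecutive $1$'s. Letting $f(a)$ denote this probability (for fixed $r$, as a function of the length), conditioning on $X_1$ — and additionally on $X_2$ in the case $X_1=1$ — gives the recursion $f(a)=(1-r)f(a-1)+r(1-r)f(a-2)$ for $a\ge 2$, with $f(0)=f(1)=1$.

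Next I would note that the characteristic equation $x^2=(1-r)x+r(1-r)$ of this recursion has discriminant $(1-r)^2+4r(1-r)=(1-r)(1+3r)$, and hence largest root
$$
\frac{(1-r)+\sqrt{(1-r)(1+3r)}}{2}=\beta(1-r)=e^{-g(bq)},
$$
where the middle equality is the definition of $\beta$ evaluated at $u=1-r$ and the last is the definition of $g$. The claim is therefore equivalent to $f(a)\le\beta(1-r)^{a-1}$, which I would prove by induction on $a$. The base cases are $a=1$ (trivial) and $a=2$, where one checks $1-r^2\le\beta(1-r)$; since both sides are nonnegative this reduces after squaring to $(1-r)(1+2r)^2\le 1+3r$, i.e.\ to $-4r^3\le 0$. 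For $a\ge 3$, the inductive hypothesis together with the positivity of the coefficients gives
$$
f(a)\le (1-r)\beta(1-r)^{a-2}+r(1-r)\beta(1-r)^{a-3}=\beta(1-r)^{a-3}\bigl[(1-r)\beta(1-r)+r(1-r)\bigr]=\beta(1-r)^{a-1},
$$
using that $\beta(1-r)$ satisfies the characteristic equation. Substituting $\beta(1-r)=e^{-g(bq)}$ yields the column bound, and the row bound follows by interchanging the roles of $a$ and $b$ (and of rows and columns) throughout.

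The argument has no serious obstacle; the only points requiring any care are the identification of the recursion's dominant root with $\beta(1-r)$ and the $a=2$ base case, both of which are one-line algebraic verifications. (One could instead diagonalise the $2\times 2$ transfer matrix to obtain the exact asymptotics of $f(a)$, but the crude bound above is all that is needed downstream and is cleanest to establish directly.)
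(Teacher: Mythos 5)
Your proof is correct. The paper itself disposes of this lemma with a one-line citation to Lemma~8(i) of \cite{h-boot}, so it gives no argument of its own; what you have supplied is a self-contained proof, and it is in fact the standard transfer-matrix/recursion argument underlying the cited result. The reduction to independent Bernoulli$(r)$ column-indicators (valid because the columns are disjoint and each site is empty with probability $1-p$ regardless of whether it is the origin), the recursion $f(a)=(1-r)f(a-1)+r(1-r)f(a-2)$, the identification of its dominant root with $\beta(1-r)=e^{-g(bq)}$ via the discriminant computation $(1-r)(1+3r)=u(4-3u)|_{u=1-r}$, and the induction $f(a)\le\beta(1-r)^{a-1}$ are all correct as written; the $a=2$ verification $(1-r)(1+2r)^2\le 1+3r\iff -4r^3\le 0$ checks out. (Minor remark: since $\beta(1-r)\le 1$, one could fold $a=2$ into the inductive step using $f(0)=1\le\beta(1-r)^{-1}$, but verifying it directly as you do is equally clean.) The only thing your write-up ``buys'' over the paper's is self-containment; there is no methodological divergence.
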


\begin{proof}
See \cite[Lemma 8(i)]{h-boot}.
\end{proof}

The following is an enhanced version of \cite[Proposition
21]{h-boot}. The reader may want to consult that proof for more
details.

\begin{lemma}[border event]
\label{border}
For rectangles $R\subseteq R'$ of dimensions $(a,b)$ and
$(a+s,b+t)$,
$$
\begin{aligned}
&\P_p\big[D(R,R')\big] \\
&\le \exp \Big(-sg(bq)-tg(aq)+
2[g(bq)+g(aq)]+stq\, e^{2[g(bq)+g(aq)]}\Big).
\end{aligned}
$$
\end{lemma}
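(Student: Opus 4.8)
The plan is to estimate $\P_p[D(R,R')]$ by writing it as a product over the four "border strips" $S_1\cup S_8\cup S_7$, $S_3\cup S_4\cup S_5$ (vertical strips, width $s$ split into left and right, full height roughly $b$), and $S_1\cup S_2\cup S_3$, $S_7\cup S_6\cup S_5$ (horizontal strips, height $t$, full width roughly $a$). The four events in the definition of $D(R,R')$ are \emph{not} independent because the corner rectangles $S_1,S_3,S_5,S_7$ are shared between a vertical and a horizontal strip; the first step is therefore to handle this overlap. I would use the van den Berg--Kesten / Harris-type correlation structure, or more simply a direct inclusion--exclusion, to reduce to the case where the four strips were disjoint, at the cost of an error term that accounts for double-counting the corners. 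Concretely, one bounds $\P_p[D(R,R')]$ from above by the product of the four marginal probabilities, each estimated by Lemma \ref{basicbound}, \emph{divided} by correction factors for the corners — or, going the other way, one conditions on the corner columns/rows and multiplies. The cleanest route is: each vertical strip $S_1\cup S_8\cup S_7$ has width (number of columns) equal to the left part of $s$, call it $s_\ell$, and has no double gap in its columns; Lemma \ref{basicbound} gives $\exp[-(s_\ell-1)g(bq)]$, and similarly $\exp[-(s_r-1)g(bq)]$ for the right strip, with $s_\ell+s_r=s$; multiplying gives $\exp[-(s-2)g(bq)]=\exp[-sg(bq)+2g(bq)]$. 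The same for rows gives $\exp[-tg(aq)+2g(aq)]$, and these two bounds combine to the "main" factor $\exp(-sg(bq)-tg(aq)+2[g(bq)+g(aq)])$.

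The remaining task — and this is where the genuinely new content lies relative to \cite[Proposition 21]{h-boot} — is the term $stq\,e^{2[g(bq)+g(aq)]}$, which corrects for the non-independence at the four corners. The point is that the column-events of the vertical strips and the row-events of the horizontal strips both constrain the corner rectangles $S_1,S_3,S_5,S_7$ (each of size roughly $s_\ell\times t$, etc.), so the naive product overcounts. I would make this precise by conditioning on the occupation of the sites in the corner rectangles: given that a particular corner is, say, entirely empty in a certain pattern, the conditional probability of "no double gap in columns" for the vertical strip and "no double gap in rows" for the horizontal strip factorizes, but the unconditional bound picks up a multiplicative factor bounded by something of the form $1 + (\text{number of corner sites})\cdot q \cdot (\text{stuff})$. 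Since the corners have total size $O(st)$ and each site is occupied with probability $p\le q$, and the "stuff" is comparable to $e^{2[g(bq)+g(aq)]}$ (the reciprocal of the probabilities already used, reflecting the conditioning), one gets the claimed additive term $stq\,e^{2[g(bq)+g(aq)]}$ after using $1+x\le e^x$.

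The main obstacle I anticipate is bookkeeping the corner overlaps cleanly: one must be careful that the four strip-events, as defined on $S_1\cup S_8\cup S_7$ etc., genuinely decompose so that the only dependence is through the four corner rectangles, and that the correction is additive rather than multiplicative in the exponent. A secondary subtlety is that the width of a vertical strip is $s_\ell$ (not $s$) and Lemma \ref{basicbound} contributes $-(s_\ell - 1)g(bq)$ not $-s_\ell g(bq)$, so the two "$-1$"s from the two vertical strips must be tracked to produce exactly the $+2g(bq)$ (and likewise $+2g(aq)$); these constants are what force the $2[g(bq)+g(aq)]$ in both the linear correction and the exponent inside the quadratic term. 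Once the conditioning-on-corners estimate is set up, the rest is the routine application of Lemma \ref{basicbound} and the inequality $1+x\le e^x$, and I would refer the reader to \cite[Proposition 21]{h-boot} for the parts of the argument that are unchanged.
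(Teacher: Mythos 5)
Your core strategy --- conditioning on the set of occupied sites in the four corner rectangles $S_1\cup S_3\cup S_5\cup S_7$, noting that given this data the four strip events are determined by (and conditionally independent on) the disjoint rectangles $S_2,S_4,S_6,S_8$, applying Lemma~\ref{basicbound} to the resulting pieces, and finishing with $1+x\le e^x$ --- is exactly the paper's proof, and is the correct route.

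However, your ``cleanest route'' for the main term does not work as stated, and your intuition about the direction of the error is backward. The four events in $D(R,R')$ (no double gap in columns of the two vertical strips; no double gap in rows of the two horizontal strips) are all \emph{increasing} in the occupation variables, so by FKG the product of their marginal probabilities is a \emph{lower} bound on the joint probability, not an upper bound; the naive product underestimates, it does not ``overcount.'' Moreover, applying Lemma~\ref{basicbound} to the full left strip $S_1\cup S_8\cup S_7$, which has height $b+t$, yields $\exp[-(s_\ell-1)g((b+t)q)]$, not $\exp[-(s_\ell-1)g(bq)]$; since $g$ is decreasing this is the weaker bound, and the sharper argument $g(bq)$ only appears after conditioning on the corners and looking at the height-$b$ columns of $S_8$ alone. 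So the conditioning step is essential for the main term as well, not merely for the error term. The paper formalizes all of this in one inequality,
$$
\P_p\big[D(R,R')\big]\le \sum_{k=0}^{st}
\binom{st}{k} p^k(1-p)^{st-k}\,e^{-(s-2k-2)g(bq)}\,e^{-(t-2k-2)g(aq)},
$$
where $k$ is the number of occupied corner sites; each such site can both discount a column (or row) and split the corresponding strip into one more piece, each piece costing a ``$-1$'' in Lemma~\ref{basicbound}, which is where the factor $2k$ (rather than $k$) comes from, and this in turn is what produces exactly $e^{2[g(bq)+g(aq)]}$ in the quadratic term after the binomial expansion. If you execute your conditioning plan carefully with this bookkeeping, you recover the statement; the ``product of four marginals divided by a correction'' plan should be discarded.
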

We remark that the second appearance of the constant 2 is crucial;
the lemma would not hold with a smaller constant, while a larger
constant would not suffice for the remainder of our argument,
owing to the behaviour of $g$ near zero (Lemma \ref{gbound}
below).

\begin{proof}[Proof of Lemma \ref{border}]
We first claim that
$$
\P_p\big[D(R,R')\big]\le \sum_{k=0}^{st}
{\binom{st}{k}} p^k(1-p)^{st-k}e^{-(s-2k-2)g(bq)}e^{-(t-2k-2)g(aq)}.
$$
To prove this, we split the event according to the set of occupied
sites in the set $S_1\cup S_3\cup S_5\cup S_7$. If the number of
such sites is $k$, they divide $S_4$ and $S_8$ into at most $k+2$
contiguous vertical strips, each of which has no double gaps in
the columns.  A similar argument applies to rows in $S_2$
and $S_6$, and the relevant events are independent.  Then use
Lemma \ref{basicbound}.

From the above, dropping the power of $(1-p)$ and using the
binomial expansion, we obtain
$$
\P_p\big[D(R,R')\big]\le e^{-sg(bq)-tg(aq)+2[g(bq)+g(aq)]}
\left(1+p e^{2[g(bq)+g(aq)]}\right)^{st}
$$
Applying $1+z\le e^z$ and $p\le q$ to the last factor finishes the
proof.
\end{proof}

\begin{lemma}[escape probability]
\label{Ebound} For some absolute constant $c>0$ and all $p< 0.1$,
$$
\P_p(E)\le \exp\left[ -c q^{-1}(\log q^{-1})^2\right].
$$
\end{lemma}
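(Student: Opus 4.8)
The plan is a union bound over the relatively few rectangles that can occur in the definition of $E$. Fix one such rectangle $R$ with $\dim(R)=(a,b)$; by the symmetry between rows and columns I may assume the long side comes first, so $a\in[B-A-10,B-A]$ and $b\in[1,A]$. Since $G(R)$ entails in particular that $R$ has no double gaps in the columns, Lemma~\ref{basicbound} gives
$$\P_p\big(G(R)\big)\le \exp\big[-(a-1)\,g(bq)\big].$$
I would then bound the two factors from below. Because $b\le A\le q^{-1/2}+1$, the argument $bq$ is at most $2\sqrt q$, hence small; and $g$ diverges logarithmically at $0$ — for instance $\beta(u)=\tfrac12\big(u+\sqrt{u(4-3u)}\big)\le\tfrac32\sqrt u$ on $(0,1]$, so that $g(z)=-\log\beta(1-e^{-z})\ge\tfrac12\log(1/z)-\log\tfrac32$ for $z\le 1$ — whence $g(bq)\ge c_1\log q^{-1}$ once $p$ is small (alternatively one quotes the near-zero estimate for $g$, Lemma~\ref{gbound}). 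On the other hand $a-1\ge B-A-11\ge\tfrac12 B$ for $p$ small, while $B\ge\tfrac12 q^{-1}\log q^{-1}$. Multiplying, $\P_p\big(G(R)\big)\le\exp\big[-c_2\,q^{-1}(\log q^{-1})^2\big]$, uniformly over all admissible $R$.

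It then remains to count the candidate rectangles. There are two orientations, at most $11$ choices for the long dimension, at most $A$ for the short one, and — since $R$ must contain the origin — at most $ab\le AB$ admissible positions once the dimensions are fixed; so their number is at most $22A^2B$, which is polynomial in $q^{-1}$. The union bound therefore yields
$$\P_p(E)\le 22A^2B\cdot\exp\big[-c_2\,q^{-1}(\log q^{-1})^2\big]\le \exp\big[-c\,q^{-1}(\log q^{-1})^2\big]$$
for a smaller absolute constant $c>0$, once $p$ is sufficiently small; the stated range $p<0.1$ is then a matter of choosing $c$ appropriately and checking the elementary estimates (on $g$ near $0$ and on the polynomial prefactor) with a little more care.

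Since the estimate is lossy by design, there is no conceptual difficulty; the two points needing attention are the logarithmic blow-up of $g$ at the origin — which is exactly why a long \emph{thin} rectangle is so unlikely to be free of double gaps, and is what produces the second factor of $\log q^{-1}$ in the exponent — and the verification that the merely polynomial number of candidate rectangles is indeed swamped by the probability $\exp[-c_2\,q^{-1}(\log q^{-1})^2]$ of each individual event $G(R)$.
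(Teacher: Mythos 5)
Your proposal is correct and takes essentially the same route as the paper: a union bound over the polynomially-many admissible rectangles, Lemma~\ref{basicbound} applied to the long direction, and the logarithmic blow-up of $g$ near the origin (via $g(Aq) \gtrsim \log q^{-1}$) combined with $a-1 \gtrsim B \gtrsim q^{-1}\log q^{-1}$ to produce the $q^{-1}(\log q^{-1})^2$ in the exponent. The paper simply states the worst-case bound $11A^2B\cdot\exp[-(B-A-11)g(Aq)]$ and cites $g(\epsilon)\sim\tfrac12\log\epsilon^{-1}$; your version spells out the same estimate with explicit constants for $\beta$, which is fine.
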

\begin{proof}
By Lemma \ref{basicbound} and the definition of $E$,
$$
\P_p(E)\le 11 A^2 B\cdot \exp[(B-A-11)g(Aq)].
$$
The factor in front of the exponential comes from
the number of possible choices of rectangles. Now use
the definitions of $A$ and $B$ and the fact that
\[
g(\epsilon)\sim \tfrac 12 \log\epsilon^{-1} \quad\text{as }
\epsilon\to 0.
\qedhere\]
\end{proof}

We have provided all the probabilistic bounds we need, and we now
proceed to analytic estimates of the expressions that appear.  The
following will be applied to the first two terms in the
exponential in Lemma \ref{keyinclusion}.

\begin{lemma}[variational principle]
\label{varpri} Let $A,B$ be any integers with $A>4$ and $B>2A$,
and suppose $(a_i,b_i)_{i=1,\ldots,n+1}$ satisfy $a_0=b_0=A$, and
$s_i:=a_{i+1}-a_i\ge 0$ and $t_i:=b_{i+1}-b_i\ge 0$, and
properties (ii)--(iv) in the definition of a good sequence.  For
any $q>0$ and any positive, smooth, convex, decreasing function
$g:(0,\infty)\to(0,\infty)$,
$$
\sum_{i=0}^n\big[s_ig(b_iq)+t_ig(a_iq)\big]\ge
\frac 2q\int_{Aq}^{Bq}g-2Bg(Bq/2).
$$
\end{lemma}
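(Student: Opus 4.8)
The plan is to convert the discrete sum into a line integral along a monotone path and then to bound that integral below by a short variational argument. The engine is that two applications of Jensen's inequality -- one for the convex function $g$, one for the concave antiderivative of $x\mapsto g(xq)$ -- are calibrated so that their difference leaves exactly the cost of the symmetric (diagonal) path. First I would adjoin the point $(a_0,b_0)=(A,A)$ and let $\gamma$ be the piecewise-linear path through $(a_0,b_0),(a_1,b_1),\dots,(a_{n+1},b_{n+1})$ in that order; since $s_i,t_i\ge0$ it is coordinatewise non-decreasing. On the $i$-th segment, $a$ and $b$ range over intervals whose left endpoints are $a_i$ and $b_i$, so because $g>0$ is decreasing, $s_i g(b_iq)\ge\int_{\mathrm{seg}_i}g(bq)\,da$ and $t_i g(a_iq)\ge\int_{\mathrm{seg}_i}g(aq)\,db$; summing over $i$ gives
$$
\sum_{i=0}^{n}\big[s_i g(b_iq)+t_i g(a_iq)\big]\ \ge\ I:=\int_{\gamma}\big[g(bq)\,da+g(aq)\,db\big].
$$

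The crux is the lower bound for $I$. Let $H$ be an antiderivative of $x\mapsto g(xq)$, and write $\ell=a+b$ and $\ell_\ast=a_{n+1}+b_{n+1}$. Since $g(aq)\,da$ and $g(bq)\,db$ are exact forms (hence path-independent) and $a,b$ run from $A$ to $a_{n+1},b_{n+1}$ along $\gamma$, adding $\int_\gamma g(aq)\,da=H(a_{n+1})-H(A)$ and $\int_\gamma g(bq)\,db=H(b_{n+1})-H(A)$ to $I$ and using $da+db=d\ell$ gives the identity
$$
I+\big(H(a_{n+1})+H(b_{n+1})-2H(A)\big)=\int_{\gamma}\big[g(aq)+g(bq)\big]\,d\ell .
$$
Along $\gamma$ we have $a+b=\ell$, so convexity of $g$ yields $g(aq)+g(bq)\ge 2g(\ell q/2)$, and as $\ell$ runs monotonically from $2A$ to $\ell_\ast$ the right-hand side is at least $2\int_{2A}^{\ell_\ast}g(\ell q/2)\,d\ell=\tfrac4q\int_{Aq}^{\ell_\ast q/2}g$. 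On the other hand $H$ is concave (its derivative $g(\cdot\,q)$ is decreasing), so $H(a_{n+1})+H(b_{n+1})\le 2H(\ell_\ast/2)$, whence the correction term is at most $2\big(H(\ell_\ast/2)-H(A)\big)=\tfrac2q\int_{Aq}^{\ell_\ast q/2}g$. Subtracting, $I\ge\tfrac2q\int_{Aq}^{\ell_\ast q/2}g$, and since $\ell_\ast>B$ (property (iv)) and $g>0$ we conclude $I\ge\tfrac2q\int_{Aq}^{Bq/2}g$.

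The remaining step is routine: since $g$ is positive and decreasing, $\tfrac2q\int_{Aq}^{Bq/2}g=\tfrac2q\int_{Aq}^{Bq}g-\tfrac2q\int_{Bq/2}^{Bq}g\ge\tfrac2q\int_{Aq}^{Bq}g-Bg(Bq/2)\ge\tfrac2q\int_{Aq}^{Bq}g-2Bg(Bq/2)$, which combined with the two displays finishes the proof -- in fact with the stronger right-hand side $\tfrac2q\int_{Aq}^{Bq/2}g$. I expect the only non-mechanical point to be the identity in the second paragraph together with the recognition that the two Jensen estimates are set up precisely so that $\tfrac4q\int_{Aq}^{\ell_\ast q/2}g$ minus $\tfrac2q\int_{Aq}^{\ell_\ast q/2}g$ is again the symmetric value $\tfrac2q\int_{Aq}^{\ell_\ast q/2}g$; the reduction of the sum to $I$ needs no cleverness, but it does rely essentially on $g$ being decreasing and the increments $s_i,t_i$ being non-negative, and these are the hypotheses to keep in view.
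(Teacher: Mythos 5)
Your proof is correct.  The first reduction (bounding the sum below by the path integral $I=\int_\gamma[g(bq)\,da+g(aq)\,db]$ using that $g$ is decreasing and $s_i,t_i\ge 0$) is identical, up to a scaling, to the paper's passage from the sum to $w(\gamma_1)/q$.  Where you depart from the paper is in the variational step.  The paper adjoins a ``closing'' path $\gamma_2$ from $(a_{n+1}q,b_{n+1}q)$ to the corner $(Bq,Bq)$, invokes an external variational lemma from \cite{h-boot} (that the diagonal minimizes $w$ among monotone paths between fixed corners) to get $w(\gamma_1)+w(\gamma_2)\ge w(\Delta)=2\int_{Aq}^{Bq}g$, and then estimates $w(\gamma_2)$ crudely.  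You instead give a self-contained argument: add the exact forms $g(aq)\,da$ and $g(bq)\,db$ to both sides, rewrite the right side as $\int_\gamma[g(aq)+g(bq)]\,d\ell$ with $\ell=a+b$, then apply Jensen once to the convex $g$ (to lower-bound the integrand) and once to the concave antiderivative $H$ (to upper-bound the correction), with the two applications calibrated so that $4/q$ minus $2/q$ leaves $2/q$ times the diagonal integral.  Handling the free endpoint through the change of variable to $\ell$ and using $\ell_\ast>B$ is cleaner than the closing-path device and even yields the marginally sharper constant $Bg(Bq/2)$ in place of $2Bg(Bq/2)$.  In effect, you have reproved the variational principle of \cite[Lemma 16]{h-boot} in this setting rather than citing it; each approach has merit: the paper's is shorter modulo the citation, while yours is self-contained, more elementary, and exposes exactly how convexity forces the symmetric path to be optimal.
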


\begin{proof}
We make use of the following technology from \cite[Section
6]{h-boot}. If $\gamma$ is a piecewise-linear path in the quadrant
$(0,\infty)^2$, with each segment oriented in the non-negative
direction of both co-ordinates, we define
$$w(\gamma):=\int_\gamma \Big( g(y)dx + g(x)dy\Big).$$
Let $\gamma_1$ be a piecewise-linear path with vertices
$(a_0q,b_0q),\ldots,(a_{n+1}q, b_{n+1}q)$ (in that order). Since
$g$ is decreasing, the sum in the statement of the lemma is at
least $w(\gamma_1)/q$ (compare \cite[Proposition 16]{h-boot}).

Also let $\gamma_2$ be the straight line path from $(a_{n+1}q,
b_{n+1}q)$ to $(Bq,Bq)$, and let $\Delta$ be the straight path
from $(Aq,Aq)$ to $(Bq,Bq)$. The variational principle in
\cite[Lemma 16]{h-boot} states that $w$ is minimized by paths that
follow the main diagonal, therefore
$$
w(\gamma_1)+w(\gamma_2)\ge w(\Delta).
$$
However, we have
$$w(\Delta)=2 \int_{Aq}^{Bq}g.$$
On the other hand,
$$
w(\gamma_2)\le 2Bq\cdot g(Bq/2),
$$
because $\min\{a_{n+1},b_{n+1}\}> B/2$ and $(B-a_{n+1})+(B-b_{n+1})<2B$.
\end{proof}

\begin{lemma}[integral]
\label{integralest} With $q,A,B,g$ as defined previously, for $p<
0.1$ and an absolute constant $C_1$,
$$
\int_{Aq}^{Bq} g\ge \frac{\pi^2}{18}-C_1{\sqrt q}\log q^{-1}.
$$
\end{lemma}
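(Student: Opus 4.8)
The plan is to estimate $\int_{Aq}^{Bq} g$ from below by comparing it to the full integral $\int_0^\infty g = \pi^2/18$ and bounding the two "tails'' $\int_0^{Aq} g$ and $\int_{Bq}^\infty g$ separately. For the lower tail, recall $A = \lceil 1/\sqrt q\rceil$, so $Aq$ is of order $\sqrt q$. Since $g(\epsilon)\sim \tfrac12\log\epsilon^{-1}$ as $\epsilon\to 0$ (the asymptotic quoted in the proof of Lemma \ref{Ebound}), $g$ is integrable near $0$ and in fact $\int_0^\delta g(z)\,dz = O(\delta\log\delta^{-1})$ as $\delta\to 0$. Applying this with $\delta = Aq = \Theta(\sqrt q)$ gives $\int_0^{Aq} g = O(\sqrt q\,\log q^{-1})$, which is absorbed into the $C_1\sqrt q\log q^{-1}$ error term.

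For the upper tail, the key point is the rapid decay of $g$ at infinity. From the definition $g(z) = -\log\beta(1-e^{-z})$ with $\beta(u) = \tfrac12\bigl(u+\sqrt{u(4-3u)}\bigr)$: as $z\to\infty$ we have $1-e^{-z}\to 1$ and $\beta(1)=1$, and a first-order expansion shows $\beta(1-e^{-z}) = 1 - \Theta(e^{-z})$, hence $g(z) = \Theta(e^{-z})$. Therefore $\int_{Bq}^\infty g = O(e^{-Bq})$. Since $B = \lfloor q^{-1}\log q^{-1}\rfloor$, we get $Bq \geq \log q^{-1} - q$, so $e^{-Bq} = O(q)$, which is far smaller than the claimed error and is again absorbed. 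Combining, $\int_{Aq}^{Bq} g = \pi^2/18 - \int_0^{Aq} g - \int_{Bq}^\infty g \geq \pi^2/18 - C_1\sqrt q\log q^{-1}$ for a suitable absolute constant $C_1$ and all $p<0.1$ (so that $q<0.2$ and the asymptotic regime applies).

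I expect the only mildly delicate step to be making the behaviour of $g$ near $0$ fully quantitative — one needs not just the asymptotic $g(\epsilon)\sim\tfrac12\log\epsilon^{-1}$ but an explicit bound such as $g(\epsilon)\le C\log\epsilon^{-1}$ (equivalently $C$ uniform over $\epsilon$ bounded away from $1$), valid on the whole interval $(0, Aq]$, in order to integrate it and obtain the $O(\sqrt q\log q^{-1})$ estimate with an absolute constant. This follows from the explicit formula for $\beta$ (for small $u$, $\beta(u)\sim u$, so $g(z)\sim\tfrac12\log(1/z)$ near $z=0$ and $g$ is continuous and bounded on any interval $[\delta_0, Bq]$), but it is the one place where one must be careful to keep all constants absolute rather than merely asserting an asymptotic. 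The upper-tail estimate, by contrast, is immediate from the exponential decay of $g$.
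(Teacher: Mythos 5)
Your approach matches the paper's proof exactly: write $\int_{Aq}^{Bq} g = \int_0^\infty g - \int_0^{Aq} g - \int_{Bq}^\infty g$, invoke $\int_0^\infty g = \pi^2/18$, and bound the two tails using the behaviour of $g$ near $0$ and near $\infty$, with the lower tail (of order $\sqrt q\,\log q^{-1}$) dominating. One computational slip in your upper-tail estimate: $\beta'(1)=0$ (one checks $\beta'(u)=\tfrac12\bigl(1+(4-6u)/(2\sqrt{u(4-3u)})\bigr)$, which vanishes at $u=1$), so the first-order terms cancel and the correct expansion is $\beta(1-\epsilon)=1-\Theta(\epsilon^2)$; hence $g(z)\sim e^{-2z}$ and $\int_K^\infty g\sim\tfrac12 e^{-2K}$ (the rate the paper records), not $g(z)=\Theta(e^{-z})$ as you assert. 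Your weaker bound $\int_{Bq}^\infty g = O(e^{-Bq})=O(q)$ is nevertheless still true and still negligible against $\sqrt q\,\log q^{-1}$, so the proof goes through unchanged, but the stated ``first-order expansion'' is incorrect and worth fixing.
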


\begin{proof}
Use (\ref{integral}) together with the asymptotics
\[
\int_0^\epsilon g\;\stackrel{\epsilon\to 0}{\sim}\;
 \frac 12\epsilon\log \epsilon^{-1}
\quad\text{and}\quad
\int_K^\infty g\;\stackrel{K\to \infty}{\sim}\; \frac 12e^{-2K}.
\qedhere\]
\end{proof}

\begin{lemma}
\label{Bgbound}
For some $C_2$ and all $p< 0.1$ we have
$
Bg(Bq/2)\le C_2\log q^{-1}.
$
\end{lemma}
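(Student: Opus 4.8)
The plan is to exploit the exponential decay of $g$ at infinity, which turns the large factor $B\approx q^{-1}\log q^{-1}$ into something harmless after it is multiplied by the tiny number $g(Bq/2)$. First I would record the elementary bounds on $B$: since $B=\lfloor q^{-1}\log q^{-1}\rfloor$ we have $q^{-1}\log q^{-1}-1\le B\le q^{-1}\log q^{-1}$, hence
$$\log q^{-1}-q\;\le\;Bq\;\le\;\log q^{-1}.$$
For $p<0.1$ one has $q=-\log(1-p)<0.11$ and $q^{-1}>9$, so $\log q^{-1}>2$ and therefore $Bq/2>1$; in particular the argument of $g$ stays bounded below by an absolute positive constant, which is the regime where the decay estimate below is valid.

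Second, I would establish a clean bound of the form $g(z)\le C_3\,e^{-2z}$ for all $z\ge 1$. The most direct route is algebraic: writing $w=e^{-z}\in(0,1)$, one has $u(4-3u)=(1-w)(1+3w)=1+2w-3w^2$ with $u=1-w$, and on $[0,1]$ the inequality $\sqrt{1+2w-3w^2}\ge 1+w-2w^2$ holds (square both sides: the difference is $4w^3(1-w)\ge0$, and $1+w-2w^2=(2w+1)(1-w)\ge0$). Therefore $\beta(1-e^{-z})=\tfrac12\big[(1-w)+\sqrt{1+2w-3w^2}\big]\ge 1-w^2=1-e^{-2z}$, so $g(z)=-\log\beta(1-e^{-z})\le-\log(1-e^{-2z})$. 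For $z\ge1$ the quantity $x=e^{-2z}$ lies in $[0,e^{-2}]$, and convexity of $x\mapsto-\log(1-x)$ (together with its vanishing at $0$) gives $-\log(1-x)\le C_3 x$ on that interval; thus $g(z)\le C_3 e^{-2z}$ for $z\ge1$. (Alternatively the same decay follows from the asymptotic $\int_K^\infty g\sim\tfrac12 e^{-2K}$ already used for Lemma~\ref{integralest}, via $g(K)\le\int_{K-1}^K g\le\int_{K-1}^\infty g\le Ce^{-2K}$ for large $K$, extended to the bounded range $z\in[1,K_1]$ using monotonicity of $g$; either version suffices.)

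Finally I would combine the two ingredients. Applying the decay bound at $z=Bq/2\ge1$ gives $g(Bq/2)\le C_3 e^{-Bq}$, and the lower bound $Bq\ge\log q^{-1}-q$ yields $e^{-Bq}\le q\,e^{q}\le Cq$ for $p<0.1$. Multiplying by $B\le q^{-1}\log q^{-1}$ then gives $Bg(Bq/2)\le C_3\cdot q^{-1}\log q^{-1}\cdot Cq=C_2\log q^{-1}$, as required.

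I do not anticipate any genuine obstacle; the single point needing care is to ensure that $Bq/2$ lies in the region where $g$ decays like $e^{-2z}$ — it does, being bounded below by $1$ for $p<0.1$ — since near the origin one instead has $g(z)\sim\tfrac12\log z^{-1}$ and no such bound is available. The decay estimate for $g$ is the only real content, and it is a short computation.
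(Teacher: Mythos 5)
Your proof is correct and takes essentially the same route as the paper: combine the exponential decay of $g$ at infinity with the definition $B=\lfloor q^{-1}\log q^{-1}\rfloor$ so that $g(Bq/2)\lesssim e^{-Bq}\lesssim q$, which cancels the $q^{-1}$ in $B$. The only difference is that you upgrade the asymptotic $g(K)\sim e^{-2K}$ (which the paper simply invokes) into an explicit non-asymptotic bound $g(z)\le C_3 e^{-2z}$ for $z\ge 1$ via a clean algebraic argument, together with a check that $Bq/2\ge 1$ for $p<0.1$; this is more careful than the paper's one-line sketch but the same idea.
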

\begin{proof}
As $g(K)\sim e^{-2K}$ as $K\to\infty$, and by the definition of $B$,
\[
Bg(Bq/2)\le \frac 1q\log q^{-1} C' \cdot e^{-2\cdot \frac 2q
(q^{-1}\log q^{-1}-1)}.
\qedhere\]
\end{proof}

The next two lemmas will be used to bound the last two terms in
the exponential in Lemma \ref{keyinclusion}.
\begin{lemma}
\label{gbound}
For every $a\le B$ we have
$
e^{2g(aq)}\le \frac {C_3}{aq}\, \log q^{-1}.
$
\end{lemma}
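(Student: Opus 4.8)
The plan is to reduce the estimate to an elementary lower bound on the function $\beta$. Since $e^{g(z)}=\beta(1-e^{-z})^{-1}$ by definition, the asserted inequality $e^{2g(aq)}\le C_3(aq)^{-1}\log q^{-1}$ is equivalent to
$$
aq\cdot\beta(1-e^{-aq})^{-2}\le C_3\log q^{-1},
$$
so it suffices to control the function $z\mapsto z\,\beta(1-e^{-z})^{-2}$ on the interval $z\in(0,Bq]$.

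The first step is the pointwise bound $\beta(u)\ge\tfrac12\sqrt u$ for all $u\in(0,1)$, which follows at once from $\beta(u)=\tfrac12\bigl(u+\sqrt{u(4-3u)}\bigr)\ge\tfrac12\sqrt{u(4-3u)}\ge\tfrac12\sqrt u$, using $4-3u\ge1$ on $(0,1)$. Hence $\beta(1-e^{-z})^{-2}\le 4/(1-e^{-z})$, and therefore
$$
z\,e^{2g(z)}\le\frac{4z}{1-e^{-z}}\qquad\text{for every }z>0.
$$
The second step is the observation that $f(z):=z/(1-e^{-z})$ is increasing on $(0,\infty)$: its derivative has numerator $1-(1+z)e^{-z}$, which is nonnegative because $e^{z}\ge1+z$; and $f(z)\to1$ as $z\to0^+$. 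Consequently $f(z)\le\max(1,z)/(1-e^{-1})$ for all $z>0$.

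Combining these, and using $a\le B$ together with $Bq\le (q^{-1}\log q^{-1})\,q=\log q^{-1}$, I get
$$
aq\,e^{2g(aq)}\le\frac{4}{1-e^{-1}}\max\bigl(1,\,aq\bigr)\le\frac{4}{1-e^{-1}}\max\bigl(1,\,\log q^{-1}\bigr)=\frac{4e}{e-1}\,\log q^{-1},
$$
the last equality because $p<0.1$ forces $q<e^{-1}$, hence $\log q^{-1}>1$. This proves the lemma with $C_3=4e/(e-1)$. I do not anticipate any real obstacle; the one point requiring care is that the asymptotic $g(\epsilon)\sim\tfrac12\log\epsilon^{-1}$ quoted earlier is not by itself enough to bound $z\,e^{2g(z)}$ up to a constant factor near $z=0$, so the explicit inequality $\beta(u)\ge\tfrac12\sqrt u$ is what makes the estimate uniform all the way down to the origin.
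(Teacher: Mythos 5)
Your proof is correct and takes essentially the same route as the paper: both reduce the estimate to a lower bound $\beta(1-e^{-z})\gtrsim\sqrt z$ and use $aq\le Bq\le\log q^{-1}$. The paper phrases this via the asymptotics $\beta(1-e^{-z})\sim z^{1/2}$ near $0$ and $\to 1$ at $\infty$ plus a compactness step, whereas you make it fully explicit with the pointwise inequality $\beta(u)\ge\tfrac12\sqrt u$ and monotonicity of $z/(1-e^{-z})$; your version is cleaner and yields an explicit constant, but is not a different method.
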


\begin{proof}
We have $e^{g(z)}=1/\beta(1-e^{-z})$. Moreover,
$\beta(1-e^{-z})\sim z^{1/2}$ as $z\to 0$ and
$\beta(1-e^{-z})\to 1$ as $z\to\infty$.
It follows that, for a large enough $M$,
$$
\sup_{0<z\le M}\frac{\sqrt z}{\beta(1-e^{-z})}\le 2\sqrt M.
$$
Therefore,
\[
e^{2g(aq)}\le \left( \frac {C' \sqrt{Bq}}{\sqrt{aq}}\right)^2.
\qedhere\]
\end{proof}

\begin{lemma}[summation bound]
\label{sumbound} Let $n$ and $a_i, b_i$ $(i=1,\dots, n+1)$ be
positive integers and suppose that $s_i:=a_{i+1}-a_i\ge 0$ and
$t_i:=b_{i+1}-b_i\ge 0$ for $i=1,\dots, n$. Further, assume that
(ii)--(vi) in the definition of a good sequence are satisfied.
Then
$$
n\le \frac{1}{\sqrt q}\log q^{-1}
\quad\text{and}\quad
\sum_{i=1}^{n}\frac {s_it_i}{a_ib_i}\le {C_4}{\sqrt q}\log q^{-1}.
$$
\end{lemma}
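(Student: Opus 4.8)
The plan is to establish the bound on $n$ first, and then deduce the bound on the sum from it together with property (vi). The essential point for the first part is that, although property (v) controls only one of the two dimensions at each step (and does not specify which), the \emph{product} $P_i:=a_ib_i$ necessarily grows geometrically: if $s_i\ge a_i\sqrt q$ then $a_{i+1}\ge a_i(1+\sqrt q)$ and $b_{i+1}\ge b_i$, so $P_{i+1}\ge(1+\sqrt q)P_i$, and the case $t_i\ge b_i\sqrt q$ is symmetric. Iterating gives $P_{n+1}\ge(1+\sqrt q)^n P_1$.

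I would bound the two ends of this chain as follows. By (ii) and monotonicity, $\min(a_1,b_1)\ge A\ge q^{-1/2}$, so $P_1\ge A^2\ge q^{-1}$. At the other end, combining (iii) and (vi) gives $a_{n+1}+b_{n+1}<(a_n+b_n)(1+\sqrt q)+8\le B(1+\sqrt q)+8<2B$ (since $B\to\infty$ and $\sqrt q\to 0$), whence $P_{n+1}\le\big((a_{n+1}+b_{n+1})/2\big)^2<B^2\le q^{-2}(\log q^{-1})^2$. Combining the two bounds, $(1+\sqrt q)^n<q^{-1}(\log q^{-1})^2$; taking logarithms and using $\log(1+\sqrt q)\sim\sqrt q$ together with $B\le q^{-1}\log q^{-1}$ yields $n\le q^{-1/2}\log q^{-1}$ for $p$ sufficiently small.

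For the sum, property (vi) gives $s_i/a_i<\sqrt q+4/a_i$, and since $a_i\ge a_1\ge A\ge q^{-1/2}$ we have $4/a_i\le 4\sqrt q$, hence $s_i/a_i<5\sqrt q$; symmetrically $t_i/b_i<5\sqrt q$. Therefore each summand satisfies $s_it_i/(a_ib_i)<25q$, and summing the $n$ terms while invoking the bound just obtained,
$$
\sum_{i=1}^n\frac{s_it_i}{a_ib_i}<25q\,n\le 25q\cdot q^{-1/2}\log q^{-1}=25\sqrt q\,\log q^{-1},
$$
which is the asserted inequality with $C_4=25$.

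I do not expect a genuine obstacle here: the statement is elementary and reduces to a few lines of estimation once the right quantities are identified. The one point deserving care is precisely that choice in the first part. Tracking the perimeter $a_i+b_i$ instead of the product would be tempting, but its per-step multiplicative growth rate degenerates to $1+o(\sqrt q)$ when the rectangle is very elongated (that is, when $\min(a_i,b_i)\ll a_i+b_i$), and would only deliver the far weaker bound $n=O(q^{-1}\log q^{-1})$. The product $a_ib_i$, by contrast, expands by a factor of at least $1+\sqrt q$ at \emph{every} step regardless of aspect ratio, which is exactly the robustness needed to reach the $q^{-1/2}$ scale.
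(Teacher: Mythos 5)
Your proof is correct and takes essentially the same route as the paper: track the product $a_ib_i$, note that (v) forces it to grow by a factor $1+\sqrt q$ at every step regardless of aspect ratio, and compare with the bounds $A^2\le a_ib_i\le B^2$ to get the count on $n$; then use (vi) together with $a_i,b_i\ge A\ge q^{-1/2}$ to bound each summand by $25q$. The only cosmetic difference is that the paper stops at $a_nb_n$ (which is directly bounded by $B^2$ via (iii)), whereas you carry the product to $P_{n+1}$ and use (vi) once more to confirm it is still $O(B^2)$ — a harmless extra step.
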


\begin{proof}  To bound $n$, use (v),(ii) and (iii) to get
$$
(1+ \sqrt q)^n \le \frac {a_nb_n}{a_1b_1}\le \frac {B^2}{A^2}\le
\frac 1q(\log q^{-1})^2.
$$
Then use this to prove the second bound as follows. As $a_i\ge A$
and $b_i\ge A$ and $A\sqrt  q\ge 1$, (vi) implies $s_i<5a_i\sqrt
q$ and $t_i<5b_i\sqrt q$. Therefore
\[
\sum_{i=1}^n \frac{s_it_i}{a_ib_i}\le n\cdot 25 q.
\qedhere\]
\end{proof}

It will be convenient to bound the probability of $G(R_1)$
(associated with the entry of the rectangle process into the good
region) in terms of a factor that can be easily combined with
those obtained in Lemma \ref{border}, in order to use Lemma
\ref{varpri}.  The following lemma deals with this estimate.
\begin{lemma}[entry term]
\label{startingpart} Let $R_1, \dots, R_{n+1}$ be a good sequence
of rectangles and let $a_0=b_0:=A$, with $s_0:=a_1-a_0$ and
$t_0:=b_1-b_0$. Then, for some absolute constant $C_5$,
$$\P_p\big[G(R_1)\big]\le
\exp\big[-s_0g(b_0q)-t_0g(a_0q)+C_5q^{-1/2}\log q^{-1}\big].$$
\end{lemma}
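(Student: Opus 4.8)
The plan is to throw away half of the event $G(R_1)$ — keeping only ``$R_1$ has no double gap in the rows'' (or, symmetrically, in the columns) — apply Lemma~\ref{basicbound} to that half, and then verify analytically that the exponent produced differs from $-s_0g(b_0q)-t_0g(a_0q)$ by at most the permitted error $C_5q^{-1/2}\log q^{-1}$.

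By property~(ii) of a good sequence, $\min(a_1,b_1)\in[A,A+3]$, so at least one of $a_1,b_1$ is at most $A+3$; by the symmetry between rows and columns I may assume $a_1\le A+3$. Since $G(R_1)$ is contained in the event that $R_1$ has no double gap in the rows, Lemma~\ref{basicbound} gives $\P_p[G(R_1)]\le\exp[-(b_1-1)\,g(a_1q)]$. Recalling $a_0=b_0=A$ and $s_0=a_1-A\ge0$, $t_0=b_1-A\ge0$ (nonnegativity from~(ii)), the assertion reduces to
$$
(b_1-1)\,g(a_1q)\ \ge\ (a_1-A)\,g(Aq)+(b_1-A)\,g(Aq)-C_5\,q^{-1/2}\log q^{-1}.
$$

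I would treat the two terms on the right separately. For the first, $a_1-A\le3$ together with the asymptotics $g(\epsilon)\sim\tfrac12\log\epsilon^{-1}$ gives $(a_1-A)g(Aq)\le 3g(Aq)\le C\log q^{-1}\le Cq^{-1/2}\log q^{-1}$. For the second, since $g$ is decreasing and $a_1\le A+3$ we have $g(a_1q)\ge g((A+3)q)$, so it suffices to bound $(b_1-1)\bigl[g(Aq)-g((A+3)q)\bigr]$. Here I would use convexity of $g$ together with $g(\epsilon)\sim\tfrac12\log\epsilon^{-1}$ to obtain $|g'(z)|\le C/z$ for $z$ near $0$, whence $g(Aq)-g((A+3)q)\le 3q\,|g'(Aq)|\le 3C/A\le 3C\sqrt q$, using $A\ge1/\sqrt q$. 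Finally $b_1-1\le B\le q^{-1}\log q^{-1}$ — this follows from properties (i) and (iii), which give $b_1\le a_1+b_1\le a_n+b_n\le B$ — so $(b_1-1)\bigl[g(Aq)-g((A+3)q)\bigr]\le 3Cq^{-1/2}\log q^{-1}$. Adding the two estimates and choosing $C_5$ accordingly proves the case $a_1\le A+3$; the case $b_1\le A+3$ is identical with the roles of $a$ and $b$ interchanged.

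The only delicate point is the estimate of $g(Aq)-g((A+3)q)$. A soft bound such as $g(Aq)-g((A+3)q)=O(1)$, or even $O(\log\log q^{-1})$, which is all that follows from $g(\epsilon)\sim\tfrac12\log\epsilon^{-1}$ alone, gets multiplied by $b_1\sim B\sim q^{-1}\log q^{-1}$ and swamps the allowed error; one genuinely needs the $O(1/A)=O(\sqrt q)$ improvement, coming from the size of the derivative of $g$ near zero. Everything else is routine bookkeeping with the definitions of $A$, $B$, and $g$.
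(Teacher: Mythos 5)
Your proof is correct and follows essentially the same route as the paper's: bound $\P_p[G(R_1)]$ by the no-double-gap condition in the direction corresponding to the shorter side via Lemma~\ref{basicbound}, then use convexity and the size of $|g'|$ near zero to compare $g$ at the actual dimension with $g(Aq)$, the multiplier of up to $B$ producing the $q^{-1/2}\log q^{-1}$ error. (Incidentally, your bound $|g'(z)|=O(1/z)$ is the right one; the paper's stated asymptotic $g'(z)\sim 1/(2\sqrt z)$ appears to be a typo for $|g'(z)|\sim 1/(2z)$, though the intermediate estimate $g(Aq)-g(b_1q)\le C_6\sqrt q$ holds either way.)
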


\begin{proof}
Without loss of generality suppose $a_1\ge b_1$.
By Lemma \ref{basicbound},
$$
P[G(R_1)]\le e^{-(a_1-1)g(b_1q)}.
$$
Now $b_1\le A+3$. We have $g'(z)\sim 1/(2\sqrt z)$ as $z\to 0$,
and $g$ is convex. Therefore
$$
0\le g(Aq)-g(b_1q)\le 3q\cdot g'(Aq)\le C_6\sqrt q.
$$
Also $t_0\leq 3$, so
$$
e^{t_0g(a_0q)}\le e^{3g(Aq)}\le C_7 q^{-3/2}.
$$
Therefore,
\[
\frac{\P_p[G(R_1)]}{\exp[-s_0g(b_0q)-t_0g(a_0q)]}\le C_7 q^{-3/2}
\cdot e^{C_6 B\sqrt q}.
\qedhere\]
\end{proof}

\begin{lemma}[entropy]
\label{entropy}
The number of good sequences of rectangles is at most
$$
\exp\left[\frac {C_8}{\sqrt q}(\log q^{-1})^2\right].
$$
\end{lemma}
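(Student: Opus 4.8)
The plan is to set up an injection from good sequences into a product of small ``choice sets'' and then bound each factor separately. A good sequence $R_1\subseteq\cdots\subseteq R_{n+1}$ is completely determined by the rectangle $R_1$ together with, for each step $i=1,\dots,n$, the four non-negative integers $(l_i,r_i,u_i,d_i)$ recording how far $R_{i+1}$ extends $R_i$ to the left, right, below and above; note that $l_i+r_i=s_i$ and $u_i+d_i=t_i$. So it suffices to count the possibilities for $R_1$, count the possibilities for each step's tuple, and multiply.

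First I would count the choices for $R_1$. Since $n\ge 1$, properties (i) and (iii) force $a_1\le a_n\le a_n+b_n\le B$ and similarly $b_1\le B$; as $R_1$ contains the origin it lies in $\{-B,\dots,B\}^2$, so there are at most $(2B+1)^4=\exp[O(\log q^{-1})]$ choices. Next, for the $i$-th step with $\dim(R_i)=(a_i,b_i)$: property (vi) gives $l_i+r_i=s_i<a_i\sqrt q+4$ and $u_i+d_i=t_i<b_i\sqrt q+4$, while $a_i,b_i\le a_n+b_n\le B$ by (i) and (iii). Since $A\sqrt q\ge 1$ we have $a_i\sqrt q+4\le 5a_i\sqrt q$, and the number of pairs $(l,r)$ of non-negative integers with $l+r\le 5a_i\sqrt q$ is $O((a_i\sqrt q)^2)=O(a_i^2q)$; multiplying by the analogous bound for $(u,d)$ gives at most $O(a_i^2 b_i^2 q^2)=O(B^4q^2)=\exp[O(\log q^{-1})]$ tuples for the step, uniformly over $i$ and over the sequence.

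Multiplying these estimates, the number of good sequences is at most $\exp[O((n+1)\log q^{-1})]$. It then remains only to invoke the first conclusion of Lemma \ref{sumbound}, namely $n\le q^{-1/2}\log q^{-1}$ (which applies since properties (ii)--(vi) hold for a good sequence); substituting this in gives the desired bound $\exp[C_8 q^{-1/2}(\log q^{-1})^2]$.

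There is no genuine obstacle here; the one point requiring care is that the number of choices \emph{per step} is merely polynomial in $q^{-1}$, so that its logarithm is $O(\log q^{-1})$. This uses property (vi) to cap each increment $s_i,t_i$ at $O(B\sqrt q)$, and properties (i), (iii) to cap the dimensions $a_i,b_i$ at $B$. The geometric-growth lower bound (v) plays no role in the counting itself; it contributes only through Lemma \ref{sumbound}, which limits the number of steps $n$. It is exactly the combination of $\exp[O(\log q^{-1})]$ choices per step with only $O(q^{-1/2}\log q^{-1})$ steps that produces the exponent $q^{-1/2}(\log q^{-1})^2$.
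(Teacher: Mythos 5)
Your proof is correct and follows essentially the same route as the paper's: count at most $O(B^4)$ choices for $R_1$, then at most $O((B\sqrt q)^4)=\exp[O(\log q^{-1})]$ choices for the four-directional extension at each step (using property (vi) and the bound $a_i,b_i\le B$), and finally cap the number of steps by $n\le q^{-1/2}\log q^{-1}$ via Lemma~\ref{sumbound}. The paper's version is slightly terser (quoting $(B\sqrt q+4)^4$ per step directly rather than expanding it as $O(B^4q^2)$), but the decomposition and the role of each hypothesis are the same.
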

\begin{proof}
There are at most $B^4$ choices for $R_1$.  For each subsequent
rectangle there are at most $(B\sqrt q+4)^4$ choices, corresponding
to the distance by which the rectangle grows in each of the four
directions. The number of steps is $n+1$, which is bounded by
Lemma \ref{sumbound}.  This gives the following upper bound:
\[
B^4\left(B\sqrt q +4\right)^{4(q^{-1/2}\log q^{-1}+1)}.
\qedhere\]
\end{proof}

We are now ready to prove the upper bound in the main theorem.
\begin{proof}[Proof of Theorem \ref{main} (upper bound)]
Take $p<0.1$. For a good sequence
of rectangles, the events $G(R_1),D(R_i,R_{i+1})$ appearing in
Lemma \ref{keyinclusion} are all independent, since they involve
the initial states of disjoint sets of sites. Therefore,
\begin{equation}
\label{bigsum}
\P_p\big(\text{indefinite growth}\big)\le \P_p(E)+
\sum_{R_1,\dots, R_{n+1}}
\P_p\big[G(R_1)\big]\prod_{i=1}^{n} \P_p\big[D(R_i, R_{i+1})\big],
\end{equation}
where the sum is over all possible good sequences of rectangles.
We now proceed to bound the various terms.

We start by using Lemmas \ref{border} and \ref{gbound} to bound
$\P_p[D(R_i, R_{i+1})]$ above by
$$
\begin{aligned}
&\exp\left[ -s_ig(b_iq)-t_ig(a_iq)\right]
\left(\frac
{C_3}{Aq}\log q^{-1}\right)^2 \exp\bigg[s_it_i q\frac
{C_3}{a_iq}\log q^{-1}\frac
{C_3}{b_iq}\log q^{-1}\bigg]\\
\le &
\exp\left[-s_ig(b_iq)-t_ig(a_iq)\right]
\cdot
\frac {C_9}{q}\left(\log q^{-1}\right)^2
\cdot
\exp\left[\frac{C_9}{q}\left(\log q^{-1}\right)^2
\frac{s_it_i}{a_ib_i} \right].
\end{aligned}
$$
Using this and Lemma \ref{startingpart} we next bound
$
\P_p[G(R_1)]\prod_{i=1}^{n} \P_p[D(R_i, R_{i+1})]
$
above by
$$
\begin{aligned}
&\exp\bigg(-\sum_{i=0}^n\big[ s_ig(b_iq)+t_ig(a_iq)\big]\bigg)\\
\times &
\exp\left[\frac{C_5}{\sqrt q}\log q^{-1}\right]
\cdot
\bigg(\frac {C_9}{q}\left(\log q^{-1}\right)^2\bigg)^n
\cdot
\exp\bigg[\frac{C_9}{q}\left(\log q^{-1}\right)^2
\sum_{i=1}^n\frac{s_it_i}{a_ib_i} \bigg].
\end{aligned}
$$
By Lemma \ref{sumbound}, this expression is at most
$$
\exp\bigg(-\sum_{i=0}^n[s_ig(b_iq)+t_ig(a_iq)]\bigg)
\cdot
\exp\left[\frac{C_{10}}{\sqrt q}(\log q^{-1})^3\right].
$$
Next, we use Lemmas \ref{varpri}, \ref{integralest} and
\ref{Bgbound}, to bound
$$
\sum_{i=0}^n\big[ s_ig(b_iq)+t_ig(a_iq)\big]\ge
\frac {2\lambda}q-\frac {C_{11}}{\sqrt q}\log q^{-1}.
$$

Finally, we substitute the last two bounds,
together with Lemmas \ref{entropy} and \ref{Ebound},
into (\ref{bigsum}) to get
\begin{align*}
\P_p(\text{indefinite growth})
\quad\le\quad\exp\left[ -c q^{-1}(\log q^{-1})^2\right]
\qquad\qquad\qquad\qquad\qquad\\
+\quad \exp\left[\frac {C_8}{\sqrt q}(\log q^{-1})^2\right]
\cdot
\exp\left[-\frac{2\lambda}{q} +\frac{C_{11}}
{\sqrt{q}}\log q^{-1}\right]
\cdot
\exp\left[\frac{C_{10}}{\sqrt q}(\log q^{-1})^3\right].
\end{align*}
Therefore
$$
\P_p(\text{indefinite growth})\le
\exp\left[-\frac{2\lambda}{q}+\frac{C'}{\sqrt q}(\log q^{-1})^3\right].
$$
As a final step, use the asymptotics $q=p+\mathcal O(p^2)$ as
$p\to 0$ to replace $q$ with $p$.
\end{proof}

\section{Lower Bound}
\label{sec:lb}

\begin{proof}[Proof of Theorem \ref{main} (lower bound)]
This follows easily from the proof of \linebreak Proposition 8 in
\cite{gh-slow}. In that article, it is proved that there is a
certain event ${\cal F}$, satisfying
$$\P_p({\cal F})\geq \exp[-2\lambda/p+c/\sqrt p],$$
for small $p$, and defined in terms of the initial states in the quadrant
$Q:=\{0,1,\ldots\}^2\subset\zt$, on which the entire quadrant $Q$
eventually becomes active in the bootstrap percolation model.  On
the same event (but replacing $\star$ with $\circ$ for sites
$x\neq 0$), the entire quadrant becomes active in the local
bootstrap percolation model also.  This fact would not follow for
an arbitrary event, but it holds for the particular event ${\cal
F}$ because it is tailored to produce growth starting from the
origin.

For the reader's convenience we briefly summarize the construction
of the event ${\cal F}$, referring to \cite{gh-slow} for the
details. We consider two ways to grow from active square $[0,a]^2$
to the active square $[0,b]^2$. The first, symmetrical, way is
that the top and right sides of the growing square always each
have an occupied site within distance 2.  The alternative,
deviant, possibility is that vertical growth is temporarily halted
by two adjacent vacant rows, and the square first elongates
horizontally to the rectangle $[0,b]\times [0,a]$, when it can
finally proceed to grow vertically until it reaches
$[0,b]^2$. A key estimate, \cite[Lemma 13]{gh-slow}, states
roughly that, for $a,b$ on the scale $1/p$, the quotient of
probabilities of the second and the first event is at least
$c_1\,p\,\exp\left[-2C\,p(b-a)^2\right]$. This suggests that one
should keep $b-a$ of order $1/\sqrt p$ to incur a deviation cost
$c_2 p$. Indeed, one can make about $m:=c_3 /\sqrt p$ deviant
steps within these constraints, and a careful organization ensures
that different choices of these steps give disjoint events. The
resulting lower bound on $\P_p({\cal F})$ is greater than
$\exp\left[-2\lambda/p\right]$ by at least the factor
$$
{\binom{p^{-1}}{m}}\big(1/\sqrt p\big)^m\cdot (c_2 p)^m\approx
(c_2/c_3)^m,
$$
which gives the required bound provided we choose $c_3\ll c_2$.

Finally let ${\cal G}$ be the event that every semi-infinite line
of sites, started at arbitrary site and moving in any direction
parallel to one of the axes, contains at least one $\bullet$
initially. Clearly, $\P_p({\cal G})=1$ and
$$
\mathcal{F}\cap\mathcal{G}\subset\{\text{indefinite growth}\},
$$
completing the proof.
\end{proof}

\section{Modified and Fr\"obose models}
\label{sec:mod}

In this section we describe the proof of
Theorem \ref{main-fm}. The proof follows the same steps, and in
fact a few simplifications are possible.  We therefore summarize
the differences.

For both the modified and Fr\"obose models, we replace the
condition of ``no double gaps'' in the definitions of the events
$G(R)$ and $D(R,R')$ with the condition that {\em no} columns (or
rows) are initially entirely empty.  The bound in Lemma
\ref{basicbound} then becomes $\exp[-af(bq)]$, where
$$f(z)=-\log(1-e^{-z}).$$
The function $f$ then replaces $g$ throughout, and the threshold
$\lambda$ then arises as $\int_0^\infty f=\pi^2/6$.  We also
modify the definition of the rectangle process so that $\rho^+$ is
$\rho$ enlarged by only $1$ in each direction.

For the modified model, the change to Lemma \ref{basicbound}
allows the second $2$ in the exponential in Lemma \ref{border} to
be replaced with $1$. In fact this change is needed for the
argument to go through, because of the different behavior of $f$
near zero. Specifically, Lemma \ref{gbound} holds with $e^{f(aq)}$
on the left side (and the same right side).

For the Frob\"ose model we can make a further simplification to
the argument, as horizontal and vertical progress now occur
disjointly. To be more precise, $D(R,R')$ is now defined to be the
event that the rectangle process makes a transition from $R$ to
$R'$. Then, referring to Figure 1, the two events:
\begin{gather*}
\text{$S_1\cup S_2\cup S_3$ and $S_5\cup S_6\cup S_7$
have no empty rows, and} \\
\text{$S_1\cup S_7\cup S_8$ and $S_3\cup S_4\cup S_5$
 have no empty columns,}
\end{gather*}
occur disjointly and are thus negatively correlated by the Van den
Berg Kesten inequality (see e.g.\ \cite{g2}).  This fact
eliminates the last summand in the exponential in Lemma
\ref{border}, makes Lemma \ref{gbound} unnecessary and reduces
the power of $\log p^{-1}$ to 2 in the final bound. 

\section*{Open Problems}

\begin{mylist}
\item Is a power of $\log p^{-1}$ in the upper bound of
Theorem \ref{main} really necessary?
\item  In the modified and Frob\"ose models, can the discrepancy
between the asymptotic power $1/2$ and the numerical estimates,
which are closer to $1/3$, be explained?
\item Is it possible to make the same arguments work if the distance in
rule (L1) is altered to something larger than 2?
\end{mylist}

\bibliography{local.bbl}

\section*{}
\vspace{-10mm} \sc

\noindent Alexander E. Holroyd: {\tt holroyd{\char64}math.ubc.ca}\\
Department of Mathematics, University of British Columbia,\\
121-1984 Mathematics Rd, Vancouver, BC V6T 1Z2, Canada \\

\noindent Janko Gravner: {\tt gravner{\char64}math.ucdavis.edu}\\
Mathematics Department, University of California,\\
Davis, CA 95616, USA

\end{document}